\documentclass[11pt]{amsart}
\usepackage{amscd,amssymb,graphics}
\usepackage[hidelinks]{hyperref}
\usepackage{amsfonts}
\usepackage{amsmath}
\usepackage{enumerate}

\input xy
\xyoption{all}

\numberwithin{equation}{section}

\oddsidemargin 0.1875 in \evensidemargin 0.1875in
\textwidth 5.9 in
\textheight 230mm \voffset=-4mm

\newtheorem{thm}{Theorem}[section]

\newtheorem{corol}[thm]{Corollary}
\newtheorem{lemma}[thm]{Lemma}
\newtheorem{prop}[thm]{Proposition}

\newtheorem{quest}[thm]{Question}
\theoremstyle{definition}
\newtheorem{defi}[thm]{Definition}

\newtheorem{notation}[thm]{Notation}

\theoremstyle{remark}
\newtheorem{remark}[thm]{Remark}
\newtheorem{example}[thm]{Example}

\begin{document}
	\title[]{Non-archimedean transportation problems and Kantorovich ultra-norms}

	\author[]{Michael Megrelishvili}
	\address{Department of Mathematics,
		Bar-Ilan University, 52900 Ramat-Gan, Israel}
	\email{megereli@math.biu.ac.il}
	\urladdr{http://www.math.biu.ac.il/$^\sim$megereli}

	\author[]{Menachem Shlossberg}
	\address{Department of Mathematics,
		Bar-Ilan University, 52900 Ramat-Gan, Israel}
		\email{menysh@yahoo.com}
	\urladdr{http://www.shlossberg.com}
	
	\date{April 12, 2016}

		\begin{abstract}
			We study a non-archimedean (NA) version of transportation problems and introduce naturally arising ultra-norms which we call \emph{Kantorovich ultra-norms}.
			For every ultra-metric space and every  
			 NA valued field
			 (e.g., the field $\mathbb Q_{p}$ of $p$-adic numbers)
			the naturally defined inf-max cost formula achieves its infimum.
			We also present NA versions of the Arens-Eells construction and of the integer value property.
			We introduce and study \emph{free NA locally convex spaces}. In particular, we provide conditions under which these spaces are normable by Kantorovich ultra-norms and also conditions which yield NA versions of Tkachenko-Uspenskij theorem about free abelian topological groups.
		\end{abstract}
		
		\subjclass[2010]{12J25, 26A45, 26E30, 30G06, 32P05, 46S10, 54Exx}
	
	\keywords{transportation problem, Kantorovich ultra-norm, Kantorovich-Rubinstein seminorm, non-archimedean valuation, $p$-adic numbers, valued field, Levi-Civita field, Fermat-Torricelli point, free abelian topological group, free locally convex space}

	\thanks{This research was supported by a grant of Israel Science Foundation (ISF 668/13)}
	
		\maketitle
	\setcounter{tocdepth}{1}
	
	\tableofcontents

	\section{Introduction}\label{sec:introduction}
	
	Kantorovich norm \cite{kan} plays a major role in various areas of mathematics, economics and computer science (see \cite{DengDu,GaoPest,MPV,pest-wh,RachRush,Sip,Ver,Villani}),
	for instance, in  Monge-Kantorovich transportation problem.
	  The 
	  seminorms that determine the topology of the free real locally convex space are in fact Kantorovich
	  seminorms (see \cite{GaoPest,MPV,pest-wh,Sip}). Uspenskij \cite{Us-free}  provided a simplified formula for these 
	  seminorms.
	
	 In this paper we deal with discrete transportation problems.
	 In Subsection \ref{sub:dem} we present a slightly more flexible ("democratic")  approach to the classical  Kantorovich problem. This approach is related to the \emph{transshipment problem}. Continuing in this direction, in Section \ref{s:naTP}  we study  \emph{non-archimedean transportation problems}. Since the term \emph{non-archimedean} appears many times in this work,  we write shortly: NA.
	
	 In Section \ref{sec:kanult} we present an NA version of the Arens-Eells embedding (Theorem \ref{t:AE}).
	 We introduce the naturally arising Kantorovich ultra-(semi)norms, defined for an ultra-(pseudo)metric space $(X,d)$ on free vector spaces $L_\mathbb{F}(X)$ via min-max formula.    Theorem \ref{t:AE} shows  that for an arbitrary NA valued field $\mathbb F$ and for any 
	 $u\in L_{\mathbb F}(X)$ the value of the Kantorovich ultra-(semi)norm $||u||$ can be approximated as
	$$||u||=\inf\bigg \{\max \limits_{1\leq i\leq k}| s_i|d(x_i,y_i): u=\sum\limits_{i=1}^{k} s_i(x_i-y_i), \ x_i,y_i\in \operatorname{supp}(u), \ s_i\in \mathbb F \bigg \},$$ 
	 where $\operatorname{supp}(u)$ is the support of $u$
	 (see Notation \ref{notation}).

	  Note that the analogous property in the archimedean case does not hold in general. Indeed,  it is no longer true when  $\mathbb F=\Bbb C$, the field of complex numbers,
	  in contrast to the case $\mathbb F=\Bbb R$
	  (see \cite{Flood,Wea} and Remark \ref{r:3}.3 below).

	The infimum in Theorem \ref{t:AE} is, in fact, a minimum. This refinement, which comes from
	  Min-attaining Theorem \ref{t:attaining},
	  provides another contrast to the archimedean case. Indeed, in the Appendix we give an  example in which  the infimum is not attained for $\mathbb F=\Bbb Q(i).$
	 Another  refinement concerns the coefficients (the \emph{$G$-value property}), that is,
	 it is enough to take the coefficients from the additive subgroup $G_u$ of $\mathbb F$ generated by the normal coefficients $\lambda_i$ of
	 $u=\sum\limits_{i=1}^n \lambda_i x_i.$
	  Namely, we show that
	 $$||u||=\min \bigg \{\max \limits_{1\leq i,j \leq m}|c_{ij}|d(x_i,x_j): c_{ij} \in \mathbb F,   \ \forall i:  1\leq i\leq n \ \sum\limits_{j=1}^{m}c_{ij}- \sum\limits_{j=1}^{m}c_{ji}=\lambda_i  \bigg\},$$
	 such that all coefficients $c_{ij}$ belong to $G_u$. Note that
	 a matrix $(c_{ij})\in \mathbb F^{m\times m}$ satisfies the  equations
	  $\sum\limits_{j=1}^{m}c_{ij}- \sum\limits_{j=1}^{m}c_{ji}=\lambda_i \ \forall i: 1\leq i\leq n$ if and only if $u=\sum\limits_{i=1}^{m}\sum\limits_{j=1}^{m}c_{ij}(x_i-x_j).$
	 As a 
	 particular case we get an NA generalization of the so-called \emph{integer value property}
	 (well known in case $\mathbb F=\Bbb R$).

	Probably one can encounter a variety of min-max optimization problems when dealing with the Kantorovich ultra-norms.
	It is worth noting that different algorithms for solving such problems are known (see \cite{BDM} for example).
	
	 In Section \ref{s:freeLCS} we introduce the \emph{free NA locally convex spaces} for NA uniform spaces. We describe their topologies in terms of Kantorovich ultra-seminorms (Theorem \ref{t:freeLCS}). We show that
	 for an ultra-metric space $(X,d)$ and a trivially valued field $\mathbb F$, the free NA
	 locally convex space $ L_\mathbb{F}(X,\mathcal{U}(d))$ (of the uniformity $\mathcal{U}(d)$ of $d$) is normable by the Kantorovich ultra-norm induced by $d$  (Theorem \ref{t:normable}).
	  By Tkachenko-Uspenskij theorem (in the
	  archimedean case $\mathbb F=\Bbb R$) the free abelian topological group $A(X)$ is a topological subgroup of $L(X)$. Using Ostrowski's classical theorem we prove that in case $\mathbb F$ is  an NA valued field
	  of zero characteristic, the uniform free NA   abelian topological group
	 $A_{\scriptscriptstyle\mathcal{NA}}(X,\mathcal{U})$ is a topological subgroup of  $ L_\mathbb{F}(X,\mathcal U)$ if and only if the restricted valuation on $\Bbb Q$ is trivial (Theorem \ref{t: Tk-Usp}).
For example, this is the case for the Levi-Civita field (Example \ref{Levi-Civita}).

\section{Kantorovich norm}\label{sec:cla}

  For a nonempty set $X$ and a field $\mathbb F$ denote by $L_\mathbb F(X)$ the free  
  $\mathbb F$-vector space on the set $X.$
  We simply write $L(X)$ in case $\mathbb F=\Bbb R$.
  Define
 $\overline{X}:=X\cup \{\textbf{0}\}$ where $\mathbf{0}\notin X$ is the
 zero element of $L_\mathbb F(X).$ The zero element of the field $\mathbb F$ is denoted by $0_\mathbb F.$
 Denote by $L^0_\mathbb F(X)$ the kernel
 of the linear functional $$L_\mathbb F(X)\to \mathbb F, \ \sum\limits_{i=1}^{n}\lambda_i x_i\mapsto \sum\limits_{i=1}^{n}\lambda_i.$$

 	\begin{notation} \label{notation}
 		Every non-zero vector $u \in L_\mathbb{F}(X)$ has a \emph{normal form}
 	  as follows: $u= \sum\limits_{i=1}^{n}\lambda_i x_i\in L_\mathbb{F}(X)$, where
 	 $x_i\in X, \ \lambda_i\in \mathbb F\setminus \{0_{\mathbb F}\}$ $\forall i:  1\leq i\leq n$ and $x_i\neq x_j$   whenever $i\neq j.$ If $u\in L^0_\mathbb F(X)$ then define the {\it support of $u$} as
 	  $\operatorname{supp}(u):=\{x_1, \ldots, x_n\}.$ Otherwise, let $\operatorname{supp}(u):=\{x_1, \ldots, x_n,x_{n+1}\} $ where $x_{n+1}=\mathbf{0}.$
 	 We denote by $m:=|\operatorname{supp}(u)|$ the length of the support, so $m$ is either $n$ or $n+1$.
 	 The support of $\mathbf{0}$ is $\{\mathbf{0}\}$. In what follows, by writing  $u= \sum\limits_{i=1}^{n}\lambda_i x_i\in L_\mathbb{F}(X)$ we mean that it is a normal form.
 	\end{notation}

\subsection{Classical transportation problem}
\label{ss:classical}
  Recall the following transportation problem from the historical work of Kantorovich \cite{kan}.
  Let $(X,d)$ be a metric space and
 denote by  $\mathbb R_{\geq 0}$  the set of non-negative reals.
 Suppose that a network of railways connects a number of production locations  $x_1,\ldots, x_n\in X$  with daily output of $\lambda_1,\ldots, \lambda_n$ carriages  of certain goods, respectively, to a number of consumption locations   $y_1,\ldots, y_m\in X$ with daily demand of $\mu_1,\ldots, \mu_m$  carriages. So, we have $\sum\limits_{i=1}^n\lambda_i=\sum\limits_{j=1}^m\mu_j,$ where $\lambda_i, \mu_j$ are positive.
Let $c_{ij}$ denote the real number transferred from  point $x_i$ to point $y_j.$ We view the metric $d$ as a cost function, and we want to minimize our total sum-cost.
The value we are seeking is
\begin{equation} \label{classform}
\inf\bigg \{\sum\limits_{i=1}^{n}\sum\limits_{j=1}^{m}c_{ij}d(x_i,y_j): c_{ij}\in \mathbb R_{\geq 0}, \sum\limits_{i=1}^{n}c_{ij}=\mu_j, \sum\limits_{j=1}^{m}c_{ij}=\lambda_i\bigg \}.
\end{equation} This infimum is known as the \emph{Kantorovich distance}
in $L(X) $ between $\sum\lambda_ix_i$ and $\sum\mu_jy_j.$
 It coincides  with $||u||$ where $u=\sum\lambda_ix_i-\sum\mu_jy_j\in L^0(X) $ and $||\cdot||$ is the norm defined on $L^0(X)$ as follows. For every $v=\sum\limits_{i=1}^n \lambda_ix_i \in L^0(X) $

  \begin{equation} \label{secform}
||v||=\inf\bigg\{\sum\limits_{i=1}^{l}|\rho_i|d(a_i,b_i):v=\sum\limits_{i=1}^{l}\rho_i(a_i-b_i), \ \rho_i\in \mathbb R, a_i,b_i\in X\bigg\}.
\end{equation}
This norm on $L^0(X)$ is called  the \emph{Kantorovich norm}, \cite{MPV}. If $(X,d)$ is a pseudometric space then (\ref{classform}) and (\ref{secform}) define the \emph{Kantorovich pseudometric} and the  \emph{Kantorovich seminorm} respectively.

Let $X$ be a Tychonoff space. Denote by $D$ the family of all continuous pseudometrics on $\overline{X}:=X\cup \{\textbf{0}\}$. For each $d \in D$ there exists a maximal seminorm $p_d$ on $L(X)$ which extends $d$. We retain the name  \emph{Kantorovich seminorm} for $p_d$ (and for its restriction on $L^0(X)$), although several authors use the name \emph{Kantorovich-Rubinstein seminorm}.
 The vector $\Bbb R$-space $L(X)$ and the family of seminorms $\{p_d: d \in D\}$ determine the free locally convex space over $X$.
See Pestov \cite{pest-wh}, for example, and compare with Raikov \cite{MPV} in the case of \emph{pointed uniform spaces}. In Section \ref{s:freeLCS} we study 
the free NA  locally convex $\mathbb{F}$-space $L_\mathbb{F}(X)$  of an NA uniform space $(X,{\mathcal U}).$ \vskip 0.3cm

Equation (\ref{secform}) has a natural generalization. Let $(\mathbb F, | \cdot|)$ be an archimedean  valued field and $(X,d)$ be a pseudometric space. For every $v=\sum\limits_{i=1}^n \lambda_ix_i \in L_{\mathbb F}^0(X)$
define the \emph{Kantorovich seminorm} as follows:
\begin{equation} \label{arcgen}
||v||=\inf\bigg\{\sum\limits_{i=1}^{l}|\rho_i|d(a_i,b_i):v=\sum\limits_{i=1}^{l}\rho_i(a_i-b_i), \ \rho_i\in \mathbb F, a_i,b_i\in X\bigg\}.
\end{equation}
Note  that every archimedean  valued field $(\mathbb F, | \cdot|)$ is essentially
a subfield of $\Bbb C$ and the valuation is equivalent to the usual valuation on  $\Bbb C$ (see \cite[p. 4]{ro} for example).
 \subsection{"Democratic" reformulation}\label{sub:dem}

We wish to highlight a point that will become important in the sequel. In the problem described above two disjoint  sets $A=\{x_1,\ldots, x_n\} $ and $B=\{y_1,\ldots, y_m\}$ are considered. The distances between the elements in  each set seem irrelevant. Indeed, every distance which appears in Formula (\ref{classform}) is  between an element of  $A$ and an element of $B$.

  Now we consider a more flexible form of the transportation problem (see also  \cite[p. 44]{Wea}).
    Let  $\lambda_1,\ldots, \lambda_n\in \mathbb R$ with $\sum\limits_{i=1}^{n}\lambda_i=0.$ We have to transfer real numbers between the points $x_1,\ldots, x_n\in X$ in the following way.  The sum of  numbers transferred from  $x_i$ minus the sum of  numbers transferred to  $x_i$ is $\lambda_i.$
  Let $c_{ij}$
  denote the real number transferred from  point $x_i$ to point $x_j.$
 We want to minimize  our cost, that is, the  value  of $\sum\limits_{i=1}^{n}\sum\limits_{j=1}^{n}|c_{ij}|d(x_i,x_j)$. 
 Clearly, one may assume that  $c_{ii}=0.$

 \vskip 0.3cm

  As the following lemma suggests, the Kantorovich norm serves both of the   approaches
described above.
  \begin{lemma}[Democratic reformulation] \label{lem:dem}
  	If $v=\sum\limits_{i=1}^n \lambda_ix_i\in L^0 (X),$
 	then
 	\begin{equation}  \label{lastfor} ||v||=\inf\bigg \{\sum\limits_{i=1}^{n}\sum\limits_{j=1}^{n}|c_{ij}|d(x_i,x_j):\sum\limits_{j=1}^{n}c_{ij}- \sum\limits_{j=1}^{n}c_{ji}=\lambda_i \ \forall i \bigg\}.\end{equation}
 	\end{lemma}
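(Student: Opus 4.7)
The plan is to prove both inequalities between the Kantorovich norm $\|v\|$ and the democratic value $N(v)$ given by formula (\ref{lastfor}). The easy direction is $\|v\|\le N(v)$: for any feasible matrix $(c_{ij})$, the element $w:=\sum_{i,j=1}^n c_{ij}(x_i-x_j)$ has coefficient of $x_i$ equal to $\sum_j c_{ij}-\sum_j c_{ji}=\lambda_i$, hence $w=v$. Thus $v=\sum_{i,j}c_{ij}(x_i-x_j)$ is a valid representation in the sense of (\ref{secform}), giving $\|v\|\le\sum_{i,j}|c_{ij}|d(x_i,x_j)$; taking the infimum over $(c_{ij})$ yields $\|v\|\le N(v)$.

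For the reverse inequality $N(v)\le\|v\|$, I would start from an arbitrary representation $v=\sum_{k=1}^{l}\rho_k(a_k-b_k)$ in (\ref{secform}) and construct a feasible matrix $(c_{ij})$ with $\sum_{i,j}|c_{ij}|d(x_i,x_j)\le\sum_{k}|\rho_k|d(a_k,b_k)$. After replacing $\rho_k(a_k-b_k)$ by $|\rho_k|(b_k-a_k)$ when $\rho_k<0$, we may assume every $\rho_k>0$. The obstruction is that the auxiliary points $a_k,b_k$ need not lie in the support $\{x_1,\dots,x_n\}$ of $v$, so the proof reduces to a flow-elimination argument. Form the directed graph on $S:=\{x_1,\dots,x_n\}\cup\{a_k,b_k\}$ with flow $f_{pq}:=\sum_{k:\,a_k=p,\,b_k=q}\rho_k$. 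Because the $x_i$-coefficient in $v$ is $\lambda_i$ and every other coefficient vanishes, the net outflow $\sum_q f_{pq}-\sum_q f_{qp}$ equals $\lambda_i$ at $p=x_i$ and equals $0$ at every auxiliary vertex $p\in S\setminus\{x_1,\dots,x_n\}$.

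The crux is to eliminate auxiliary vertices one at a time. If $q$ is auxiliary with total in-flow $F=\sum_p f_{pq}$ equal to the total out-flow $\sum_p f_{qp}$, I would replace $f$ by $f'$ that is identical away from $q$ but redistributes the flow through $q$ as direct edges: $f'_{rs}:=f_{rs}+f_{rq}f_{qs}/F$ for $r,s\neq q$, while setting $f'_{rq}=f'_{qs}=0$. The flow-balance conditions at all other vertices are preserved, and the triangle inequality gives
\[
\sum_{r,s\neq q}\frac{f_{rq}f_{qs}}{F}\,d(r,s)\le\sum_{r,s\neq q}\frac{f_{rq}f_{qs}}{F}\bigl(d(r,q)+d(q,s)\bigr)=\sum_{r}f_{rq}d(r,q)+\sum_{s}f_{qs}d(q,s),
\]
so the added cost does not exceed the cost of the deleted edges incident to $q$. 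Iterating until only support vertices remain produces a matrix $(c_{ij})$ that is feasible in (\ref{lastfor}) with $\sum_{i,j}|c_{ij}|d(x_i,x_j)\le\sum_k\rho_k d(a_k,b_k)$. Taking the infimum over representations gives $N(v)\le\|v\|$. I expect the main technical step to be this cost-bookkeeping for the vertex-elimination; the rest is routine.
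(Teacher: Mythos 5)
Your proof is correct, and its skeleton is the same as the paper's: the inequality $||v||\le N(v)$ comes from reading each feasible matrix as a representation in the sense of (\ref{secform}), and the reverse inequality comes from converting an arbitrary representation $v=\sum_k\rho_k(a_k-b_k)$ with $\rho_k>0$ into one supported on $\{x_1,\dots,x_n\}$ without increasing cost, by eliminating auxiliary points via the triangle inequality. Where you genuinely differ is in the elimination step itself. The paper (following Uspenskij's reductions) repeatedly takes one incoming term $\lambda(x-z)$ and one outgoing term $\mu(z-y)$ at an auxiliary point $z$ and splits according to whether $\lambda=\mu$, $\lambda<\mu$ or $\lambda>\mu$; this decreases the number of terms containing $z$ by one at a time and needs an inner termination argument (ending with the observation that the last two terms at $z$ have equal coefficients). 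You instead reroute all flow through $q$ in a single step, proportionally, via $f'_{rs}=f_{rs}+f_{rq}f_{qs}/F$; your verification that the balance equations are preserved and that the added cost is dominated by the removed cost $\sum_r f_{rq}d(r,q)+\sum_s f_{qs}d(q,s)$ is correct, and it dispenses with the inner induction entirely. The price is a few trivial loose ends you should state: discard terms with $a_k=b_k$ at the outset (they contribute nothing), handle the degenerate case $F=0$ by simply deleting the then-isolated vertex $q$, and note that any self-loops $f'_{rr}$ created by the redistribution carry zero cost and zero net coefficient, hence may be dropped. Both arguments use only the ordinary triangle inequality, so both apply verbatim to pseudometrics.
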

 \begin{proof}  Denote by $||v||'$ the expression on the right hand side of
 Equation	(\ref{lastfor}).
 	 We want to show that $||v||=||v||'.$  Let $(c_{ij})\in \mathbb R^{n\times n}$  such that  $\sum\limits_{j=1}^{n}c_{ij}- \sum\limits_{j=1}^{n}c_{ji}=\lambda_i. $ The coefficient of $x_i$ in $\sum\limits_{i=1}^{n}\sum\limits_{j=1}^{n}c_{ij}(x_i-x_j)$ is just $\sum\limits_{j=1}^{n}c_{ij}- \sum\limits_{j=1}^{n}c_{ji}.$
 	It follows that
 	 $$v=\sum\limits_{i=1}^{n}\sum\limits_{j=1}^{n}c_{ij}(x_i-x_j).$$ So, by 
 	 Equation (\ref{secform}) $$||v||\leq  ||v||'.$$ On the other hand, using  reductions
 	from \cite{Us-free}, we show that if $v=\sum\limits_{i=1}^{l}\rho_i(a_i-b_i)$ then there exists a decomposition $v=\sum\limits_{i=1}^{n}\sum\limits_{j=1}^{n}c_{ij}(x_i-x_j)$ with  $$\sum\limits_{i=1}^{n}\sum\limits_{j=1}^{n}|c_{ij}|d(x_i,x_j)\leq \sum\limits_{i=1}^{l}|\rho_i|d(a_i,b_i).$$ To see this, first observe  that we may assume that $\rho_i>0 \ \forall i.$ Consider the following reductions which do not increase the value of the corresponding
	sum:
 	\begin{enumerate}
 	\item Delete any term of $v$ of the form $\rho_i(x-x).$
 	\item If there exist two terms
 	 $\lambda(x_i-x_j)$ and $\mu(x_i-x_j)$ with $\lambda, \mu>0$   replace them with the single  term $(\lambda+\mu)(x_i-x_j).$
 	\item Assuming the decomposition  contains the term $\lambda(x-z)$ where $z\notin \operatorname{supp}(v),$ then we necessarily have
 	also a term of the form $\mu(z-y)$ where $\lambda,\mu>0.$ We have three subcases to consider
 	replacing in each case the terms $\lambda(x-z)$ and $\mu(z-y)$.
\vskip 0.2cm  	
 	\begin{enumerate} [(a)] \item
 	 If $\lambda= \mu$ then  replace
 	 the pair of terms above with one term $\lambda(x-y).$ It is possible since
 	 $\lambda d(x,y)\leq \lambda d(x,z)+\lambda d(z,y).$   \item
 	 If $\lambda<\mu$ then  replace the terms with $\lambda(x-y)$ and $(\mu-\lambda)(z-y).$ The value of the sum does not increase since
 	  \begin{align*}
 	  \lambda d(x,z)+\mu d(z,y) &=\lambda(d(x,z)+d(z,y))+(\mu-\lambda)d(z,y)\geq \\  &\geq \lambda d(x,y)+(\mu-\lambda)d(z,y).
 	  \end{align*}
 	
 	 \item If $\lambda>\mu$ then replace the terms
 	 with $(\lambda-\mu)(x-z)$ and $\mu(x-y).$
 	
 	 This time we have
 	 \begin{align*}
 	 \lambda d(x,z)+\mu d(z,y) &=(\lambda-\mu)d(x,z)+\mu(d(x,z)+d(z,y))\geq \\ &\geq (\lambda-\mu)d(x,z)+\mu d(x,y).
 	 \end{align*}
 	 \end{enumerate}
 	 \end{enumerate}
 	 Using reduction $(3)$  the number of terms containing $z$  decreases. Applying finitely many substitutions of this form and taking into account that the sum of  $z's$ coefficients in any decomposition of $v$ is equal to zero,
 	 we obtain a decomposition of $v$ with only two terms containing $z$: $\lambda(x-z)$ and $\lambda(z-y).$ Now use reduction $(3.a).$ Therefore, we can assume that the decomposition only contains  terms with support elements. That is, terms of the form $\lambda(x_i-x_j) $ where
 	$\lambda\geq 0.$ At this point we use reduction $(2)$ if necessary.  We obtain a decomposition $v=\sum\limits_{i=1}^{n}\sum\limits_{j=1}^{n}c_{ij}(x_i-x_j)$ with  $$\sum\limits_{i=1}^{n}\sum\limits_{j=1}^{n}|c_{ij}|d(x_i,x_j)\leq \sum\limits_{i=1}^{l}|\rho_i|d(a_i,b_i).$$
 	It follows that $||v||'\leq  ||v||$
 	and  we conclude that $||v||=||v||'.$
 	\end{proof}

 \begin{remark} \label{r:3} \
 	\begin{enumerate}
 	\item
Every non-zero element $v\in L^0(X)$  has the form
 $v=\sum\limits_{i=1}^{n}	a_ix_i-\sum\limits_{j=1}^{m}	b_jy_j $ where
 $ \ \sum\limits_{i=1}^n a_i=\sum\limits_{j=1}^m b_j$ and
  $\forall i:  1\leq i\leq n \ \forall j:  1\leq j \leq m \ a_i,b_j>0.$
  Using this fact  one can  move back from the democratic approach to the
  classical one as in Section \ref{ss:classical}.
  \item  Using compactness arguments one can prove that the infimum  in Formula (\ref{classform}) is attained. By the proof of  Lemma \ref{lem:dem}, for any minimizing matrix $(c_{ij})$ from (\ref{classform}) there exists a matrix $(t_{ij})$ from  (\ref{lastfor}) such that $$\sum\limits_{i=1}^{n}\sum\limits_{j=1}^{n}|t_{ij}|d(x_i,x_j)\leq \sum\limits_{i=1}^{n}\sum\limits_{j=1}^{n}|c_{ij}|d(x_i,x_j).$$ It follows that the infimum in (\ref{lastfor}) is attained at $(t_{ij})$.
  \item  Replacing $\Bbb R$ with  $\Bbb C$  completely changes the situation.
  As it follows from
  \cite{Flood,Wea}, in the latter case of $\mathbb F=\Bbb C$ we cannot even guarantee that the infimum in 
 (\ref{secform}) can be approximated by computations on support elements of a vector $u \in L_{\Bbb C}^0(X)$.
 A detailed example is provided in the Appendix (Theorem \ref{main} and Example \ref{e:outside}).
  \end{enumerate}
 \end{remark}

\section{Non-archimedean transportation problem}
\label{s:naTP}

In this section we  discuss the main object of our work: a {\it non-archimedean transportation problem} (NATP).
First we recall some definitions.

\subsection{Preliminaries}
A metric space $(X,d)$ is  an \emph{ultra-metric space}
if $d$ is an \emph{ultra-metric}, i.e., it satisfies {\it the strong
	triangle inequality}
$$d(x,z) \leq \max \{d(x,y), d(y,z)\}.$$
Allowing the distance between distinct elements to be zero we obtain
the definition of  an \emph{ultra-pseudometric}.
It is well known that if $d(x,y) \neq d(y,z)$ then $d(x,z) = \max \{d(x,y), d(y,z)\}.$

A uniform space $(X,{\mathcal U})$ is NA if it has a base $B$ consisting of equivalence relations on $X.$ For every ultra-pseudometric $d$ on $X$
the open balls of radius $\varepsilon >0$ form a clopen partition of
$X.$ So, the uniformity induced by any ultra-pseudometric $d$ on $X$
is NA. A uniformity is NA if and only if
it is generated by a system $\{d_i\}_{i \in I}$ of {\it ultra-pseudometrics}.

Recall that a topological group is {\it non-archimedean} if it has a base at the identity consisting of open subgroups. For some properties of this class of topological groups see for example \cite{MS1,MS}. We say that a topological ring (or field or vector space) is NA if its
additive group is NA. Note that Lyudkovskii \cite{Lyu} studied NA free Banach spaces.

A {\it valuation} on  a field $\mathbb F$ is a function $|\cdot|: \mathbb F\to [0,\infty)$ which satisfies the following  ($x,y\in \mathbb F$):
\begin{enumerate}
 \item $|x|\geq 0$;
\item $|x|=0$ if and only if $x=0_{\mathbb F}$;
\item $|x+y|\leq |x|+|y|$;
\item $|xy|=|x||y|$.
\end{enumerate}

  Replacing condition $(3)$ with
  $|x+y|\leq \max\{|x|,|y|\}$ we obtain a {\it non-archimedean valuation}. In this case the metric $d$ defined by $d(x,y)=|x-y|$
is an ultra-metric.

 An (NA) {\it valued field} is a field $\mathbb F$ with a (resp., NA) valuation $|\cdot|$. Every NA valued field is NA as a topological group because every open ball $\{x \in \mathbb F: |x| <r\}$ is a (clopen) additive subgroup.

A valuation which is not NA is called  an {\it archimedean valuation}.
Let $(\mathbb F, | \cdot|)$ be a  valued field.   A \emph{seminorm} on an $\mathbb F$-vector space $V$ is
a map  $||\cdot||: V \to [0,\infty)$ such that ($x,y\in V, \alpha\in\mathbb F$):
\begin{enumerate}
\item $||0_{V}||=0$;
\item $||x+y|| \leq ||x||+||y||;$
\item $||\alpha x||=|\alpha| ||x||.$
\end{enumerate}
If instead of condition $(1)$ we have: $||x||=0$ if and only if $x=0_{V}$,
then  $||\cdot||$ is called a {\it norm}. If the valuation on $\mathbb F$ is NA and condition $(2)$ is replaced by $||x+y|| \leq \max \{||x||,||y||\}$,    then the  norm (seminorm) $||\cdot||$ is an {\it ultra-norm} (respectively, ultra-seminorm).

Let $(\mathbb F,|\cdot|)$ be an NA valued field. The set $\{|x|: |x|\neq 0\}$ is a subgroup of the multiplicative group $\Bbb R_{>0}$ of all positive reals and is said to be the {\it value group} of the
valuation $|\cdot|.$ The value group is either discrete or dense in $\Bbb R_{>0}$.
Accordingly the valuation is called {\it discrete} or {\it dense}.
If the value group is the trivial subgroup $\{1\}$ then the valuation is said to be {\it trivial}. For any non-trivial discrete valuation the value group is the infinite cyclic closed subgroup $\{a^k: k \in \Bbb Z\}$ of $\Bbb R_{>0}$, where $a:=\max\{|x|: |x| <1 \}$.

Note that  discretely valued fields form a  major subclass in the class of NA valued fields.
This subclass is closed under taking arbitrary subfields, completions and finite extensions.

The $p$-adic valuation on the field $\Bbb Q$ of rationals
is a classical particular case (for every prime $p$).
The completion is
the field of $p$-adic numbers $\mathbb Q_{p}$, a locally compact NA valued field.  The valuation of every locally compact NA valued field is discrete (see \cite{ro}). The natural valuation
 on the field $\Bbb C\{\{T\}\}$ of formal Laurent series (which is not locally compact) is discrete
 \cite{Schn}.

 Below we use several times the following well known theorem of Ostrowski (see for example \cite[Theorem 1.2]{ro}) which shows that the $p$-adic valuation, up to 
 a natural equivalence, is the only NA non-trivial valuation on $\Bbb Q$. In particular, any NA valuation on $\Bbb Q$ is discrete.

 \begin{thm} \label{Ostrowski}
 	\emph{(Ostrowski's Theorem)} Let $|\cdot|$ be a non-trivial NA valuation on the field $\Bbb Q$ of rationals. Then there exists a prime $p$ such that  $|\cdot|$ is equivalent to the
 	$p$-adic valuation $|\cdot|_p$ (namely, there exists $c>0$ such that $|x|=|x|_p^c \ \ \forall x \in \Bbb Q$).
 \end{thm}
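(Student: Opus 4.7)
The plan is to imitate the standard Ostrowski argument, exploiting the strong triangle inequality to identify a distinguished prime. First I would observe that $|1|=1$ (since $|1|^{2}=|1|$ and $|1|\neq 0$), whence, by a straightforward induction using the NA inequality $|x+y|\leq\max\{|x|,|y|\}$, every positive integer $n$ satisfies $|n|=|1+\cdots+1|\leq 1$, and hence $|m|\leq 1$ for all $m\in\mathbb{Z}$.

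Next I would consider the set
\[
\mathfrak{p}:=\{m\in\mathbb{Z}:|m|<1\}.
\]
I would check that $\mathfrak{p}$ is an ideal of $\mathbb{Z}$: closure under addition follows from the strong triangle inequality, and absorption $m\in\mathfrak{p}$, $k\in\mathbb{Z}\Rightarrow km\in\mathfrak{p}$ follows from multiplicativity and $|k|\leq 1$. It is a \emph{prime} ideal, since if $|ab|<1$ then $|a||b|<1$ forces $|a|<1$ or $|b|<1$. Because the valuation is non-trivial, some rational has valuation $\neq 1$, and writing it as a ratio of integers one deduces that $\mathfrak{p}\neq\{0\}$. Hence $\mathfrak{p}=p\mathbb{Z}$ for some prime $p$, and in particular $0<|p|<1$.

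Having isolated $p$, I would set $c:=-\log|p|/\log p>0$, i.e.\ $|p|=p^{-c}$. For an arbitrary nonzero integer $n$, the unique factorization $n=p^{k}m$ with $\gcd(m,p)=1$ gives $m\notin\mathfrak{p}$, and since also $|m|\leq 1$, one concludes $|m|=1$. Therefore
\[
|n|=|p|^{k}|m|=p^{-ck}=|n|_{p}^{c}.
\]
Extending by multiplicativity, $|x/y|=|x|/|y|=|x|_{p}^{c}/|y|_{p}^{c}=|x/y|_{p}^{c}$ for every nonzero rational, which is the required equivalence.

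The step most likely to cause friction is verifying that $\mathfrak{p}$ is nontrivial: non-triviality of $|\cdot|$ on $\mathbb{Q}$ only gives some $x\in\mathbb{Q}^{\times}$ with $|x|\neq 1$; one must rule out $|x|>1$ using $|m|\leq 1$ for integers $m$ (so in the representation $x=a/b$ the obstruction sits in the denominator, forcing $|b|<1$, i.e.\ $b\in\mathfrak{p}\setminus\{0\}$). The rest is essentially bookkeeping using the multiplicativity of the valuation.
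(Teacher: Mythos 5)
Your argument is correct and complete: it is the standard proof of the non-archimedean case of Ostrowski's theorem (establish $|m|\leq 1$ on $\mathbb{Z}$ via the strong triangle inequality, show $\mathfrak{p}=\{m\in\mathbb{Z}:|m|<1\}$ is a nonzero prime ideal, and read off the exponent $c$ from $|p|$). The one point you flag as delicate — nontriviality of $\mathfrak{p}$ — you resolve correctly: writing a rational with $|x|\neq 1$ as $a/b$ and using $|a|,|b|\leq 1$ forces one of $a,b$ to lie in $\mathfrak{p}\setminus\{0\}$, whichever side of $1$ the value $|x|$ falls on.

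For comparison: the paper does not prove this statement at all; it records it as a known classical theorem and refers to van Rooij (Theorem 1.2 of the cited monograph on non-archimedean functional analysis), using it later only as a black box (e.g.\ to conclude that any NA valuation on $\mathbb{Q}$ is discrete, and in the proof of Theorem \ref{t: Tk-Usp}). So there is no in-paper argument to diverge from; your proof supplies exactly the standard justification the paper delegates to the literature.
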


 The following is an important example of a densely valued NA field.

  \begin{example} \label{Levi-Civita1}
  	 Recall that the elements of the Levi-Civita field $\mathcal R$  (see \cite{Sham} for example) are
  	 real functions   	 
  	 $f: \Bbb Q \to \Bbb R$ with left-finite
  	support. That is, for every rational number $q$ the set $A_q:=\{a<q| \ f(a)\neq 0\}$ is finite. The field operations are addition and convolution. $\Bbb R$ is (algebraically) isomorphic to a subfield of $\mathcal R.$  Indeed, the map $a \mapsto f_a$ from $\Bbb R$ to $\mathcal R$, where $f_a(0)=a$ and $f_a(x)=0 \ \forall x\neq 0,$ is a field embedding.
  
   For every non-zero element 	$f\in \mathcal R$,  the support of $f$ (notation: $\operatorname{supp}(f)$) has a minimum,	due to its left-finiteness.
  	Recall that $\mathcal R$ admits a natural NA valuation  defined by  $|f|=e^{-\min \operatorname{supp}(f)}$ for non-zero $f$.
    It is easy to see that this valuation is dense.
  	At the same time the restricted valuation on $\Bbb Q$ is trivial.
  \end{example}

\subsection{Formulation of NATP} \label{s:NATP}

We formulate here a non-archimedean transportation problem using a democratic approach (compare Section \ref{lem:dem}).
Let $\mathbb F$ be an NA valued field, $(X,d)$ be an ultra-(pseudo)metric space and   $x_i\in X$ for every $1\leq i\leq n$.

We have to transfer field elements between these points in the following way.  The sum of elements transferred from  $x_i$ minus the sum of elements transferred to $x_i$ is $\lambda_i,$ where
$\lambda_1,\ldots, \lambda_n$ are given  elements in $\mathbb F$ with 
$\sum\limits_{i=1}^{n}\lambda_i=0_{\mathbb F}.$

Let $c_{ij} \in \mathbb F$
denote the element transferred from  $x_i$ to $x_j.$ 
 Note that by the  setting of NATP we have $\forall  i \ \sum\limits_{j=1}^{n}c_{ij}- \sum\limits_{j=1}^{n}c_{ji}=\lambda_i.$ We want to minimize as much as possible our max-cost, that is, the  value  of $$\max\limits_{1\leq i,j\leq n}|c_{ij}|d(x_i,x_j).$$
	A natural question arises:
 \begin{quest} \label{quest:na} Is the infimum  \begin{equation} \label{mul:inf}
 	\inf\bigg \{\max \limits_{1\leq i, j \leq n}|c_{ij}|d(x_i,x_j): \forall  i \ \sum\limits_{j=1}^{n}c_{ij}- \sum\limits_{j=1}^{n}c_{ji}=\lambda_i  \bigg\}
 	\end{equation}
 	 attained?
\end{quest}

Min-attaining Theorem \ref{t:attaining} implies that the answer to Question \ref{quest:na} is positive
for every  NA valued field $\mathbb F$ (e.g., $\mathbb Q_{p}$)
and any ultra-(pseudo)metric space $(X,d).$

   In fact we will show in Theorem \ref{t:AE} that  (\ref{mul:inf}) can be studied via a special ultra-(semi)norm $||\cdot||_d$ on $L_{\mathbb F}(X)$. We call it the \emph{Kantorovich ultra-(semi)norm} associated with $d$  (Definition \ref{d:KantUltraNorm})
   because its role is similar to the role of the Kantorovich
 (semi)norm in the classical  transportation problem (with $\mathbb F=\Bbb R$).
   Indeed, the infimum in (\ref{mul:inf}) coincides with $||u||_d,$ where
   $u=\sum\limits_{i=1}^{n}\lambda_ix_i \in L^0_\mathbb F(X).$

	\section{Kantorovich ultra-norms} \label{sec:kanult}

Let $(X,d)$ be an ultra-pseudometric space. Consider the set $\overline{X}:=X\cup \{\textbf{0}\},$ where $\textbf{0} \notin X$. In the sequel we repeatedly use the following simple lemma.

\begin{lemma} \label{l:extend}
For every ultra-pseudometric $d$ on $X$ there exists an ultra-pseudometric (denoted also by $d$) which extends $d$ on $\overline{X}:=X\cup \{\mathbf{0}\},$ such that $\mathbf{0}$ is an isolated point
in $(\overline{X},d)$.
\end{lemma}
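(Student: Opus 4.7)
The result is essentially a construction: we have to decide how far $\mathbf{0}$ should be from each point of $X$, so that the strong triangle inequality survives and some ball around $\mathbf{0}$ contains only $\mathbf{0}$. The naive choice $d(\mathbf{0},x)=c$ for a positive constant $c$ fails unless $X$ has diameter $\le c$, since we would need $d(x,y)\le\max\{c,c\}=c$. So the plan is to let the distance $d(\mathbf{0},x)$ grow with $x$, bounded below by a fixed positive constant.

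Concretely, I would argue as follows. If $X=\emptyset$ the statement is trivial, so assume $X\neq\emptyset$ and fix any basepoint $x_0\in X$ and any constant $c>0$. Extend $d$ to $\overline{X}$ by setting $d(\mathbf{0},\mathbf{0})=0$ and
\[
d(\mathbf{0},x)=d(x,\mathbf{0}):=\max\{c,\,d(x,x_0)\}\qquad(x\in X),
\]
leaving $d$ unchanged on $X\times X$. Symmetry and the vanishing of $d$ on the diagonal are immediate from this definition and from $d$ being an ultra-pseudometric on $X$.

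For the strong triangle inequality $d(u,w)\le\max\{d(u,v),d(v,w)\}$, the only nontrivial cases are those in which $\mathbf{0}$ appears. If $\mathbf{0}\in\{u,w\}$, say $u=\mathbf{0}$ and $w=x\in X$ with $v=y\in X$, the inequality reduces to
\[
\max\{c,d(x,x_0)\}\le\max\{\max\{c,d(y,x_0)\},\,d(y,x)\},
\]
which follows from $d(x,x_0)\le\max\{d(x,y),d(y,x_0)\}$ by taking the maximum with $c$ on both sides. If instead $v=\mathbf{0}$ and $u=x,w=y\in X$, we need $d(x,y)\le\max\{\max\{c,d(x,x_0)\},\max\{c,d(y,x_0)\}\}$, which is immediate from $d(x,y)\le\max\{d(x,x_0),d(x_0,y)\}$. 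The remaining case where two of the three points coincide with $\mathbf{0}$ is trivial.

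Finally, for every $x\in X$ we have $d(\mathbf{0},x)\ge c>0$, so the open ball of radius $c$ around $\mathbf{0}$ is $\{\mathbf{0}\}$, proving that $\mathbf{0}$ is isolated in $(\overline{X},d)$. There is no real obstacle here; the only thing to take care of is that a constant value for $d(\mathbf{0},\cdot)$ is insufficient when $X$ is unbounded, which is exactly why the maximum with $d(x,x_0)$ is necessary.
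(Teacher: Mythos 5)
Your construction is exactly the one the paper uses (the paper takes $c=1$ and defers the verification to Claim 1 of Theorem 8.2 in the cited reference on free non-archimedean groups), and your case analysis of the strong triangle inequality and the isolation of $\mathbf{0}$ is correct. No issues.
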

\begin{proof}
	Fix $x_0\in X$ and extend the definition of $d$ from $X$ to $\overline{X}$ by letting $d(x,\mathbf{0})=\max\{d(x,x_0),1\}.$ For more details see Claim 1 of \cite[Theorem 8.2]{MS}.
	\end{proof}
		
			\begin{defi} \label{d:KantUltraNorm} 
			Let $(\overline{X},d)$ be an ultra-pseudometric space and $\mathbb{F}$ be an NA valued field. Let us say that an ultra-seminorm $p$ on
			$L_{\mathbb F}(X)$ is \emph{$d$-compatible} if the pseudometric
			 induced on $\overline{X}$
			 by $p$ is $d$. We say that $p$ is  a \emph{Kantorovich ultra-seminorm for $d$}   if $p$ is the maximal  $d$-compatible ultra-seminorm on 
			  $L_{\mathbb F}(X)$.
			\end{defi}
		The maximal property of the Kantorovich norm in the classical non-discrete transportation problem was proved in \cite{MPV}, and this justifies Definition \ref{d:KantUltraNorm}.
				
				The Kantorovich ultra-norm $||\cdot||$  in Theorem \ref{t:AE} serves the NA transportation problem described in Section \ref{s:NATP}. To see this observe that one of the reformulations of this ultra-norm 
			 	($m=n$ in Claim $3$ below) coincides with the infimum  in Formula (\ref{mul:inf}) above.
			 Moreover,   using the description of the Kantorovich ultra-norm one can obtain an \emph{unbalanced} version of NATP, that is, the  case  $u=\sum\limits_{i=1}^{n}\lambda_ix_i \notin L^0_\mathbb F(X).$ 
		
						The classical analogue of the following Theorem \ref{t:AE} is the Arens-Eells embedding 
						\cite{AE}. Its usual verification is based on the dual space, 
						 involving the space of Lipschitz functions
						\cite{Flood,Wea,GaoPest}. 
				In our case the approach is different.
					If $d$ is a metric on $X$,
					  then the Kantorovich seminorm defined on
					$L^0(X)$
					is, in fact, a norm. This fact relies on the classical Hahn-Banach theorem (see \cite[Corollary 2.2.3]{Wea}) which does not always hold for general NA Banach spaces,
					 \cite{Schn,PS}. The proof that the ultra-seminorm in the following theorem  is an ultra-norm  uses only the fact that the valuation of $\mathbb F$ is NA.
					Below, in Corollary \ref{c:min}, we show  that
	the infimum in this theorem is, in fact, a minimum.
	
			\begin{thm} \label{t:AE} \emph{(Non-archimedean Arens-Eells embedding)}
				
				Let $(\overline{X},d)$ be an ultra-pseudometric space and $\mathbb{F}$ be an NA valued field.
				\begin{enumerate}
				\item There exists a Kantorovich  ultra-seminorm  $||\cdot||:=||\cdot||_d$ on  $L_\mathbb{F}(X)$ for $d$.
				Furthermore, if $d$ is an ultra-metric then $||\cdot||_d$ is an ultra-norm.
					\item
$||u||$ can be computed on the support of $u$
					for every
					 $u \in L_\mathbb{F}(X)$. That is,
					$$||u||=\inf\bigg \{\max \limits_{1\leq i\leq k}| s_i|d(x_i,y_i): u=\sum\limits_{i=1}^{k} s_i(x_i-y_i), \ x_i,y_i\in \operatorname{supp}(u), \ s_i\in \mathbb F \bigg \}.$$
					\item Moreover, if $u=\sum\limits_{i=1}^n \lambda_i x_i$ (normal form) then
					$$||u||=\inf\bigg \{\max \limits_{1\leq i,j \leq m}|c_{ij}|d(x_i,x_j): c_{ij} \in \mathbb F,   \ \forall i:  1\leq i\leq n \ \sum\limits_{j=1}^{m}c_{ij}- \sum\limits_{j=1}^{m}c_{ji}=\lambda_i  \bigg\},$$
					where $c_{ii}=0_{\mathbb F}$  and $m=|\operatorname{supp}(u)|$ (see Notation \ref{notation}).
					\end{enumerate}
			\end{thm}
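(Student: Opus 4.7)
The plan is to define $||\cdot||$ by the unrestricted infimum
$$||u||:=\inf\bigg\{\max_{1\le i\le k}|s_i|d(x_i,y_i): u=\sum_{i=1}^k s_i(x_i-y_i),\ x_i,y_i\in\overline{X},\ s_i\in\mathbb{F}\bigg\},$$
verify it is the maximal $d$-compatible ultra-seminorm on $L_{\mathbb{F}}(X)$, and then show by an NA reduction scheme that the same infimum is realized by the support-only and matrix formulas in (2) and (3).

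First I would check the ultra-seminorm axioms directly: concatenating decompositions of $u$ and $v$ realizes $u+v$ at the maximum of the two max-costs (ultra-triangle inequality), and rescaling by $\alpha\in\mathbb{F}$ gives homogeneity. The bound $||x-y||\le d(x,y)$ follows from the trivial one-term decomposition. For the reverse inequality, when $d(x,y)>0$ I would use a bump function: the ball $B:=\{z\in\overline{X}: d(x,z)<d(x,y)\}$ is clopen in the ultra-(pseudo)metric topology with separation property $a\in B,\ b\notin B\Rightarrow d(a,b)\ge d(x,y)$ (by the strong triangle inequality and Lemma \ref{l:extend} to cover $\mathbf{0}$-endpoints). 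Setting $f(z):=\mathbf{1}_B(z)-\mathbf{1}_B(\mathbf{0})$ for $z\in X$ produces a linear extension $\tilde f: L_{\mathbb{F}}(X)\to\mathbb{F}$ with $\tilde f(\mathbf{0})=0$ and $\tilde f(x-y)=1$; the NA estimate on any decomposition $x-y=\sum s_i(a_i-b_i)$ gives $1\le\max_i|s_i|d(a_i,b_i)/d(x,y)$. Maximality of $||\cdot||$ then follows: any $d$-compatible ultra-seminorm $p$ satisfies $p(u)\le\max_i|s_i|p(x_i-y_i)=\max_i|s_i|d(x_i,y_i)$ by ultra-triangle. For the ultra-norm clause of (1), the same bump localized to $B(x_k,r)$ with $r:=\min\{d(x_k,y): y\in\operatorname{supp}(u),\ y\neq x_k\}>0$ produces $\tilde f(u)=\lambda_k$ and hence $||u||\ge r|\lambda_k|>0$ whenever $\lambda_k\neq 0_{\mathbb{F}}$.

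For (2), I would reduce any $\overline{X}$-decomposition of max-cost $S$ to one supported on $\operatorname{supp}(u)$ without raising $S$. Let $z$ appear in the decomposition but $z\notin\operatorname{supp}(u)$. The net flow at $z$ (incoming minus outgoing) is $0_{\mathbb{F}}$: for $z\in X\setminus\operatorname{supp}(u)$ this is the coefficient of $z$ in $u$; for $z=\mathbf{0}$ (only needing elimination when $u\in L^0_{\mathbb{F}}(X)$) a direct count identifies the net flow at $\mathbf{0}$ with the coefficient-sum of $u$, which vanishes. Given an incoming term $s(a-z)$ and outgoing term $t(z-b)$, I replace them by
$$s(a-b)+(t-s)(z-b)\text{ if }|s|\le|t|,\quad\text{or}\quad t(a-b)+(s-t)(a-z)\text{ if }|s|>|t|.$$
The strong triangle inequality $d(a,b)\le\max(d(a,z),d(z,b))$ and $|t-s|\le\max(|s|,|t|)$ ensure each new term has cost $\le S$, while strictly decreasing the count of terms touching $z$. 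Iterating until one side is exhausted leaves residual terms $\sum_i s_i(z-b_i)$ (or the symmetric case) with $\sum_i s_i=0_{\mathbb{F}}$; selecting a pivot index $i_0$ with $|s_{i_0}|$ maximal and using $\sum s_i=0$ rewrites this as $\sum_i s_i(b_{i_0}-b_i)$, whose max-cost is still bounded by $S$ via $|s_i|d(b_{i_0},b_i)\le\max(|s_i|d(b_{i_0},z),|s_i|d(z,b_i))$ combined with $|s_i|\le|s_{i_0}|$.

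Claim (3) is a repackaging of (2): a support decomposition $u=\sum_k s_k(x_{i_k}-x_{j_k})$ yields the matrix $c_{ij}:=\sum\{s_k: i_k=i,\ j_k=j\}$ satisfying $\sum_j c_{ij}-\sum_j c_{ji}=\lambda_i$ for $1\le i\le n$ (which encodes that the coefficient of $x_i$ in $\sum c_{ij}(x_i-x_j)$ equals $\lambda_i$), with $\max_{ij}|c_{ij}|d(x_i,x_j)\le\max_k|s_k|d(x_{i_k},x_{j_k})$ by the NA inequality; conversely every such matrix gives back a support decomposition of the same max-cost. The main technical obstacle I anticipate is the reduction in Claim (2), especially the pivot step used when one-sided residual flows remain at $z$; the NA inequality $|t-s|\le\max(|s|,|t|)$ together with the strong triangle inequality is precisely what enables these manipulations without any archimedean averaging analogue of Lemma \ref{lem:dem}.
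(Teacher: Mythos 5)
Your proposal is correct, and for parts (2) and (3) it follows essentially the paper's own route: the same two-term replacement $s(a-z)+t(z-b)\mapsto s(a-b)+(t-s)(z-b)$ (when $|s|\le|t|$), justified by the strong triangle inequality together with $|t-s|\le\max(|s|,|t|)$, followed by aggregation of support terms into a matrix $(c_{ij})$. Your way of finishing the elimination of a non-support point $z$ --- reducing to a one-sided residue $\sum_i s_i(z-b_i)$ with $\sum_i s_i=0_{\mathbb F}$ and rewriting it as $\sum_i s_i(b_{i_0}-b_i)$ for a pivot $i_0$ of maximal $|s_{i_0}|$ --- is a cleaner termination argument than the paper's sign-flip bookkeeping, and your observation that the net flow at $\mathbf{0}$ equals (minus) the coefficient sum of $u$ correctly replaces the paper's separate reduction for the case $z=\mathbf{0}$. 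Where you genuinely diverge is part (1): the paper obtains the isometry $\|x-y\|=d(x,y)$ as a corollary of the support reduction and gets definiteness from the purely ``primal'' lower bound $\|u\|\ge r\cdot l_0$ (its Claims 4--6), whereas you use explicit dual functionals built from indicators of clopen balls. This is precisely the Lipschitz-dual style of argument the paper says it avoids, but in the NA setting it is unproblematic: balls are clopen, the separation estimate $d(a,b)\ge d(x,y)$ for $a\in B$, $b\notin B$ follows at once from the strong triangle inequality, and no Hahn--Banach extension is needed because the functionals are written down by hand (your normalization $f(z)=\mathbf{1}_B(z)-\mathbf{1}_B(\mathbf{0})$ correctly handles decompositions with endpoints at $\mathbf{0}$, and the term $c\sum_i\lambda_i$ vanishes in both the $L^0$ and non-$L^0$ cases). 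The dual route buys a proof of positivity that is independent of the reduction machinery; the paper's primal route keeps everything inside $L_{\mathbb F}(X)$ and produces the quantitative bound $\|u\|\ge r\cdot l_0$ of Claim 4, which is reused later in the paper, so neither approach strictly dominates the other.
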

	\begin{proof}
			For $u\in L_\mathbb{F}(X)$ define
		$$||u||:=\inf\bigg \{\max \limits_{1\leq i\leq n}|\lambda_i|d(x_i,y_i): u=\sum\limits_{i=1}^{n}\lambda_i(x_i-y_i), \ x_i,y_i\in \overline{X}, \ \lambda_i\in \mathbb F \bigg \} .$$
\vskip 0.3cm

\noindent \textbf{Claim 1:} $||\cdot||$ is an ultra-seminorm on $L_\mathbb{F}(X).$
\begin{proof}
Clearly, $||u||\geq 0$  for every $u\in L_\mathbb{F}(X).$
Since $\textbf{0}=\textbf{0}-\textbf{0}$ we also have $||\textbf{0}||\leq d(\textbf{0},\textbf{0})=0$ and thus $||\textbf{0}||=0.$	 The equality $||\lambda u||=|\lambda||| u||$ follows from the fact that for every $\lambda\neq 0_\mathbb F$, if $u=\sum\limits_{i=1}^n \lambda_i(x_i-y_i)$ then $\lambda u=\sum\limits_{i=1}^n \lambda\lambda_i(x_i-y_i)$ and, if
$\lambda u=\sum\limits_{i=1}^n \lambda_i(x_i-y_i)$ then $ u=\sum\limits_{i=1}^n \lambda^{-1}\lambda_i(x_i-y_i).$
Of course, we 
also use axiom (4) in the definition of valuation.
\\ Finally, observe that
$$||u+v|| \leq \max \{||u|| , ||v|| \} \ \ \ \forall \ u,v \in L_\mathbb{F}(X).$$ 
Indeed, assuming the contrary, there exist decompositions
$$u=\sum\limits_{i=1}^{k} \lambda_i(x_i-y_i), \ v=\sum\limits_{i=k+1}^{l} \lambda_i(x_i-y_i)$$
  such that
$$||u+v||>c:=\max\{\max\limits_{1\leq i\leq k}|\lambda_i|d(x_i,y_i),\max\limits_{k+1\leq i\leq l}|\lambda_i|d(x_i,y_i)\}.$$ This contradicts the definition of
$||u+v||$ since  $u+v=\sum\limits_{i=1}^{l} \lambda_i(x_i-y_i)$ with $$||u+v||>\max\{\max\limits_{1\leq i\leq k}|\lambda_i|d(x_i,y_i),\max\limits_{k+1\leq i\leq l}|\lambda_i|d(x_i,y_i)\}=\max\limits_{1\leq i\leq l}|\lambda_i|d(x_i,y_i).$$
	\end{proof}
\noindent \textbf{Claim 2:} For every
 $u \in L_\mathbb{F}(X)$
the value of $||u||$ can be computed on the support of $u.$  That is, 
	$$||u||=\inf\bigg \{\max \limits_{1\leq i\leq k}| s_i|d(x_i,y_i): u=\sum\limits_{i=1}^{k} s_i(x_i-y_i), \ x_i,y_i\in \operatorname{supp}(u), \ s_i\in \mathbb F \bigg \}.$$
\begin{proof}
		Let $u=\sum\limits_{i=1}^{k} s_i(x_i-y_i)$ be a decomposition of $u\in L_\mathbb{F}(X). $	
 Consider the following  steps which do not increase the value of $\max \limits_{1\leq i\leq k}|s_i|d(x_i,y_i)$:

\begin{enumerate}
\item Delete any term of $u$ of the form $0_\mathbb F(x-y)$ or $s_i(x-x).$
\item Replace
the term $s_i(x_i-y_i)$ with $-s_i(y_i-x_i).$
\item Assume there exist $1\leq i_0\leq n$ and $\mathbf{0}\neq z\notin \operatorname{supp}(u)$ such that $z=x_{i_0}$ or $z=y_{i_0}.$ Using steps $(1)-(2)$ we may assume without
loss of generality that the terms $\lambda(x-z)$ and $\mu(z-y)$ appear in the decomposition of $u= \sum\limits_{i=1}^{k}s_i (x_i-y_i)$
with $|\lambda|\leq |\mu|.$ Replace them with $\lambda(x-y)$ and $(\mu-\lambda)(z-y).$

This way the number of terms in which the element $z$ appears decreases. The value of the corresponding maximum  $\max \limits_{1\leq j\leq k}|\mu_j|d(x_j,y_j)$ does not increase under such a substitution, because
$$\max \{|\lambda|d(x,y),|\mu-\lambda|d(z,y)\}\leq \max \{|\lambda|d(x,z),|\mu|d(z,y)\}.$$ Indeed, using the strong triangle inequality and the fact that $|\lambda|\leq |\mu|$  we obtain $$|\lambda|d(x,y)\leq \max \{|\lambda|d(x,z),|\lambda|d(z,y)\}\leq \max \{|\lambda|d(x,z),|\mu|d(z,y)\}.$$

Also, assuming that  $|\mu-\lambda|d(z,y)>|\mu|d(z,y)$  we obtain  $|\mu-\lambda|>\max\{|\lambda|,|\mu|\},$ which contradicts the strong triangle inequality. Thus, $|\mu-\lambda|d(z,y)\leq |\mu|d(z,y).$
 Applying finitely many substitutions of this form and taking into account that the sum of  $z's$ coefficients in any decomposition of $u$ is equal to zero,
 we obtain a decomposition of $u$ with only two terms in which $z$ appears: $\lambda(x-z)$ and $\lambda(z-y).$
 These terms can be replaced by the single term  $\lambda(x-y)$ since $\lambda(x-z)+\lambda(z-y)=\lambda(x-y)$ and
 $|\lambda|d(x,y)\leq  \max \{|\lambda|d(x,z),|\lambda|d(z,y)\}.$ Now the term  $\lambda(x-y)$  and all other terms in the new decomposition
 do not contain the element $z.$
 \item  Assume there exist $1\leq i_0\leq n$ and $z=\mathbf{0}\notin \operatorname{supp}(u)$ such that $z=x_{i_0}$ or $z=y_{i_0}.$ We claim that similar to
case $(3)$ it suffices to consider  decompositions $u= \sum\limits_{i=1}^{k}s_i (x_i-y_i)$  that contain terms of the form  $\lambda(x-z)$ and $\mu(z-y)$
with $|\lambda|\leq |\mu|.$ Indeed, since $z=\mathbf{0}\notin \operatorname{supp}(u)$  it follows that $u=\sum\limits_{i=1}^{n}\lambda_it_i \in  L^{0}_\mathbb F(X)$ (normal form) and $\sum\limits_{i=1}^{n} \lambda_i=0_{\mathbb F}.$
If there exists only one term $\lambda(x-z)$ in which $z$ appears then by the previous steps we can assume, without loss of generality, that we are dealing with a decomposition of $u$ of the form $u=\sum\limits_{j=1}^{k}\mu_j(a_j-b_j)+\lambda(t_1-z),$ where $a_j,b_j\in\operatorname{supp}(u).$ On the one hand,
$u'=\sum\limits_{j=1}^{k}\mu_j(a_j-b_j)\in L^{0}_\mathbb F(X).$ On the other hand, for every $i\neq 1$ the sum of the coefficients of $t_i$ in this  decomposition of $u'$ is equal to $\lambda_i.$
It follows that the sum of $t_1's$ coefficients in $u'$ is $\lambda_1$ and this implies that $\lambda=0_{\mathbb F}.$
So we may assume that the terms $\lambda(x-z)$ and $\mu(z-y)$ appear in the decomposition of $u= \sum\limits_{i=1}^{k}s_i (x_i-y_i)$
with $|\lambda|\leq |\mu|.$ If $\lambda=\mu$ we simply replace these terms with the single term $\lambda(x-y).$  Otherwise, replace these terms with  $\lambda(x-y)$ and $(\mu-\lambda)(z-y).$ In any case we can say that completely similar to reduction  $(3)$  the number of terms in which the element $z$ appears decreases. The value of the corresponding maximum  $\max \limits_{1\leq j\leq k}|\mu_j|d(x_j,y_j)$ does not increase under such a substitution. We apply finitely many substitutions of this form and obtain a decomposition of $u$ in which all terms do not contain the element $z.$ 
 \end{enumerate}
Using  reductions $(3)$ and $(4)$   we complete the proof of Claim 2.  
\end{proof}\vskip 0.2cm

\noindent \textbf{Claim 3:} For $u=\sum\limits_{i=1}^{n}\lambda_ix_i \in L_\mathbb F(X)$ let $m=|\operatorname{supp}(u)|$
(by Notation \ref{notation} we have $m =n,$ or $m=n+1$).
Then,\begin{equation}\label{eq:claim3}
||u||=\inf\bigg \{\max \limits_{1\leq i,j \leq m}|c_{ij}|d(x_i,x_j): c_{ij} \in \mathbb F,   \ \forall i: 1\leq i\leq n \ \sum\limits_{j=1}^{m}c_{ij}- \sum\limits_{j=1}^{m}c_{ji}=\lambda_i  \bigg\},\end{equation}
 where $c_{ii}=0_{\mathbb F}.$
\begin{proof}
	By Notation \ref{notation}, $\sum\limits_{i=1}^{n}\lambda_ix_i$ is a normal form of $u$. It follows that
	 a matrix $(c_{ij})\in \mathbb F^{m\times m}$ satisfies the  equations  $$\sum\limits_{j=1}^{m}c_{ij}- \sum\limits_{j=1}^{m}c_{ji}=\lambda_i \ \forall i:  1\leq i\leq n$$ if and only if $u=\sum\limits_{i=1}^{m}\sum\limits_{j=1}^{m}c_{ij}(x_i-x_j).$ Indeed, on the one hand  the coefficient of $x_i$ in the 
	 right expression is  $\sum\limits_{j=1}^{m}c_{ij}- \sum\limits_{j=1}^{m}c_{ji}$  for all $1\leq i\leq n$. On the other hand,
the coefficient of $x_i$ in $u$ is $\lambda_i.$
Note that by our convention if $m=n+1$ then $x_{n+1}=\mathbf{0}.$  Since $d(x_i,x_i)=0$ and $c_{ii}-c_{ii}=0_{\mathbb F}$  we may assume without loss of generality that $c_{ii}=0_{\mathbb F}.$

  By Claim $2$, $||u||$ can be computed on the support of $u.$  If we have two terms of the form $\lambda(x_i-x_j),\ \mu(x_i-x_j)$ we can replace them with the single term $(\lambda+\mu)(x_i-x_j)$ since $|\lambda+\mu|\leq \max\{|\lambda|,|\mu|\}.$
Thus, we may consider only decompositions  of the form $u=\sum\limits_{i=1}^{n}\sum\limits_{j=1}^{n}c_{ij}(x_i-x_j),$
where $\sum\limits_{j=1}^{m}c_{ij}-  \sum\limits_{j=1}^{m}c_{ji}=\lambda_i$ and $c_{ii}=0_{\mathbb F}$.
\end{proof}

\vskip 0.2cm
\noindent \textbf{Claim 4:}   For $u=\sum\limits_{i=1}^{n}\lambda_ix_i \in L_\mathbb F(X)$ let $m=|\operatorname{supp}(u)|.$ Then, $$||u||\geq r\cdot l_0 $$ where $r=\max\{|\lambda_i|: 1\leq i\leq n\}$ and $l_0=\min\{d(x_i,x_j):\ 1\leq i\neq j\leq m\}.$
 \begin{proof}
 	Assuming the contrary let $||u||<r\cdot l_0.$ By Claim 3 there exists  a matrix $(c_{ij})\in\mathbb F^{m\times m}$ such that $ \sum\limits_{j=1}^{m}c_{ij}- \sum\limits_{j=1}^{m}c_{ji}=\lambda_i \  \forall i:  1\leq i\leq n \ $ and in addition
 	 $r\cdot l_0>\max\limits_{1\leq i, j\leq m}|c_{ij}|d(x_i,x_j)$.
 	  Taking into account
 	  the definition of $l_0$ we get $r>|c_{ij}| \ \forall i,j.$
 	 By the definition of $r$ there exists $1\leq i_0\leq n$ such that $r=|\lambda_{i_0}|.$ Thus, $|\lambda_{i_0}|>|c_{ij}| \ \forall i, j.$
 	 In particular,   $$|\lambda_{i_0}|>\max\{\max\limits_{1\leq j\leq m} |c_{i_{0}j}|,\max\limits_{1\leq j\leq m}|c_{ji_0}|\}.$$
 	 Applying the strong triangle inequality to the equation   $\sum\limits_{j=1}^{m}c_{i_0j}- \sum\limits_{j=1}^{m}c_{ji_0}=\lambda_{i_0}$ we obtain the contradiction  $$|\lambda_{i_0}|\leq \max\{\max\limits_{1\leq j\leq m} |c_{i_{0}j}|,\max\limits_{1\leq j\leq m}|c_{ji_0}|\}.$$
 	\end{proof}
 \noindent \textbf{Claim 5:} $\iota: (\overline{X},d) \hookrightarrow
(L_\mathbb{F}(X),||\cdot||), \
\iota(x)=\{x\}$
is an isometric embedding, i.e.
$$||x-y||=d(x,y) \ \ \ \forall \ x,y \in \overline{X}.$$
\begin{proof}
If $x=y$ the assertion is trivial so we may assume that $u=x-y\neq \textbf{0}.$  By Claim $2$ the value  $||x-y||$ can be computed on the support $\{x,y\}.$
Using also some of the reductions we mentioned above, it suffices to consider only the trivial decomposition $u=x-y.$ It follows that $||x-y||=d(x,y).$
\end{proof}

 \noindent	\textbf{Claim 6:} 
 $||u||=0$ if and only if $u$ admits a presentation 
 $u=\sum\limits_{k=1}^{t} s_k(x_k-y_k)$ such that $x_k,y_k \in \operatorname{supp}(u)$ and $d(x_k,y_k)=0$ for every $k \in \{1, \dots, t\}$. In particular, the ultra-seminorm $||\cdot||$ is an ultra-norm on $L_\mathbb{F}(X)$ if and only if $d$ is an ultra-metric on $X$. 
 \begin{proof} 
 The ``if" part is trivial. 
 	
 The ``only if" part is obvious for $u=\textbf{0}$. Suppose that $u \neq \textbf{0}$ and let $u=\sum\limits_{i=1}^{n}\lambda_ix_i$ be a normal form of $u.$
 	First suppose that $u$ is \emph{$d$-irreducible} in the following sense: 
 	there are no $1\leq i \neq j\leq m$ such that $d(x_i,x_j)=0$, where  $m=|\operatorname{supp}(u)|.$ 
 	We claim that $||u|| >0$. Indeed, 
 	the corresponding $l_0$ defined in Claim 4 is positive and 
 	we have  $||u||\geq r\cdot l_0$. Clearly, $r >0$ because  $u \neq \textbf{0}$. So, 
 	we get that $||u|| >0$. 
 	
 	Now we can suppose that $u$ is $d$-reducible. 
 	 We describe a certain reduction for $u$. Choose a pair $i \neq j$ such that $d(x_i,x_j)=0$. Without loss of generality we may assume that $x_i\neq\textbf{0}.$ Denote $w_1:= \lambda_i(x_i-x_j), u_1:=u-w_1$. By Claims 1 and 5 we know that $||w_1||=0$. Hence, $||u||=||u-w_1||=0$.
 	 
 	 $\blacktriangleright$ In case  $x_j=\textbf{0}$   
 	  delete the term $\lambda_ix_i$ in the presentation $u=\sum\limits_{i=1}^{n}\lambda_ix_i$ to obtain a normal form of $u-w_1.$
 	  
 	   $\blacktriangleright$  In case $x_j\neq \textbf{0}$ observe that
 	$\lambda_ix_i + \lambda_j x_j=\lambda_i (x_i-x_j) + (\lambda_i+\lambda_j)x_j$.  Replacing the terms $\lambda_ix_i, \lambda_j x_j$ in the presentation $u=\sum\limits_{i=1}^{n}\lambda_ix_i$ with the single term $(\lambda_i+\lambda_j)x_j$ we get a normal form of $u-w_1$. 
 	
 	In both cases we can then use the same reductions for $u_1$ to obtain $u_2:=u_1-w_2$, etc. Continuing in this manner  we get, after finitely many steps, a vector $u_t$ such that $||u||=||u_t||=0$ and in the normal presentation of $u_t$ we have no pair of distinct elements $a,b \in X$ such that $d(a,b)=0$. 
 	That is, $u_t$ is $d$-irreducible. 
 	 Then necessarily $u_t=\textbf{0}$. Indeed, 
 	if not, then as above we obtain that $||u_t||>0$. 
 			
 		So, $u_t=\textbf{0}$. Hence, $u=\sum\limits_{k=1}^{t} w_k$. By the definition of $w_k$ this proves Claim 6. 
 \end{proof}

\vskip 0.3cm

\noindent \textbf{Claim 7:} (Maximality property) Let $\sigma$ be an ultra-seminorm on $L_\mathbb{F}(X)$
such that
\begin{equation} \label{eq:sigma}
\sigma(x-y) \leq d(x,y)\ \ \forall x,y\in \overline{X}.
\end{equation}
 Then
$\sigma\leq ||\cdot||.$
\begin{proof}
Let $u$ be a non-zero element of $L_\mathbb{F}(X)$ and
$\sigma$ 
be an ultra-seminorm  which satisfies
(\ref{eq:sigma}).
Then for every decomposition $u=\sum\limits_{i=1}^{n}\lambda_i(x_i-y_i), \ x_i,y_i\in \overline{X}$  we obtain
$$\sigma(u)=\sigma(\sum\limits_{i=1}^{n}\lambda_i(x_i-y_i))\leq \max_{1 \leq i \leq n}|\lambda_i|\sigma(x_i-y_i) \leq  \max_{1 \leq i \leq n}|\lambda_i|d(x_{i},y_{i}).$$  It follows from the definition of the ultra-seminorm $||\cdot||$  that $\sigma(u) \leq ||u||.$
		\end{proof}
		
		Combining the claims we complete the proof of Theorem \ref{t:AE}.
\end{proof}
\begin{example}
	Let $\mathbb F:=\mathbb Z_2$ be the discrete field of two elements. Note that in this case $(L_\mathbb{F}(X), ||\cdot||)$, as a topological group,  coincides with $B_{\scriptscriptstyle\mathcal{NA}}$  the \emph{uniform free NA Boolean group} over $(X,d).$  Indeed, this follows from the fact that  $B_{\scriptscriptstyle\mathcal{NA}}$ is metrizable by a Graev type ultra-norm (see \cite{MS}).
\end{example}

\begin{remark}  \label{rem:addthird}   \
	 Theorem \ref{t:attaining}  shows that in Theorem \ref{t:AE} we can assume, in addition, that:
	\begin{enumerate}
	\item The infimum  in  Theorem \ref{t:AE} is attained.
	\item The coefficients $c_{ij}$ (in Theorem \ref{t:AE}.3) belong to the
	additive subgroup $G_u$ of $\mathbb F,$ generated by the normal coefficients $\lambda_i$ of $u$.
	\end{enumerate}
\end{remark}

\begin{remark} Using Claim $3$ and additional computations we obtain a simplified version of
	Equation (\ref{eq:claim3}):
$$||u||=\min \bigg \{\max \limits_{1\leq i<j\leq m}|c_{ij}|d(x_i,x_j):
\forall i\geq j \ c_{ij}=0 \ , \ \forall i:  1\leq i\leq n   \ \sum\limits_{j=i+1}^{m}c_{ij}- \sum\limits_{j=1}^{i-1}c_{ji}=\lambda_i \bigg\}.$$
\end{remark}

 \section{Generalized integer value property}

\subsection{$G$-value property for subgroups $G \subseteq \Bbb R$}

 First recall the {\it integer value property} for the case $\mathbb F=\Bbb R$.
  Let $d$ be a (pseudo)metric on $X$ and $||\cdot||$ 
  be its Kantorovich (semi)norm.
 For an element of   $L^0 (X)$ with integer coefficients  the inf-sum cost Formula (\ref{classform}) achieves its infimum at
an integer matrix $(c_{ij}).$
  See, for example, Sakarovitz \cite[p. 179]{Sak}, and
  Uspenskij \cite{Us-free}.

 Replacing the
 group of integers $\Bbb Z$ with any other additive subgroup  $G$ of $\Bbb R$ we obtain a natural generalization. We call it
  the \emph{$G$-value property}. It means that whenever we have an element of  $L^0 (X)$ with coefficients from $G$, the minimum in the formula is obtained at a matrix with elements from $G.$
 This generalized version  can be proved using
  the tools of convex analysis as in \cite{Us-free}.

  In the sequel we prove the $G$-value property for the NA case.

\vskip 0.3cm

\subsection{$G$-value property in the non-archimedean case}

In this subsection
let $\mathbb F$ be an NA valued field and $(X,d)$ be an ultra-(pseudo)metric space.

\begin{lemma} \label{l:GenDigital}
	Let $G$ be an additive subgroup of an NA valued field  $\mathbb F.$
	Let $u= \sum\limits_{i=1}^{n}\lambda_i x_i\in L_\mathbb{F}(X)$
	with $\lambda_i\in  G \ \forall i.$
	Then the ultra-seminorm $||u||$ can be computed using only the coefficients from $G$. That is, in the  formula of Theorem \ref{t:AE}.2 we get
	$$||u||:=\inf\bigg \{\max \limits_{1\leq k\leq l} |\rho_k| d(s_k,t_k): u=\sum\limits_{k=1}^{l}\rho_k(s_k-t_k), \ s_k,t_k\in \overline{X}, \ \rho_k\in G \bigg \} .$$
\end{lemma}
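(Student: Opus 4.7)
Since $G\subseteq \mathbb F$, the inequality $\|u\|\le \inf\{\cdots:\rho_k\in G\}$ is immediate from Theorem \ref{t:AE}.2. My plan for the reverse inequality is to show that any $\mathbb F$-valued decomposition of $u$ can be transformed, without increasing its max-cost $M$, into a $G$-valued one; applying this to a decomposition with $M\le \|u\|+\varepsilon$ and letting $\varepsilon\to 0$ will finish the argument.

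Given a decomposition $u=\sum_{k=1}^l \rho_k(s_k-t_k)$ with $\rho_k\in\mathbb F$ and max-cost $\le M$, I would first use the reductions of Claim 2 in Theorem \ref{t:AE} to force $s_k,t_k\in\operatorname{supp}(u)$, then combine oppositely oriented edges via $c(x-y)+c'(y-x)=(c-c')(x-y)$ and the NA estimate $|c-c'|\le \max\{|c|,|c'|\}$. This yields a decomposition $u=\sum_{i<j}c_{ij}(x_i-x_j)$ with $c_{ij}\in\mathbb F$ and the same bound $M$ on the max-cost.

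The key combinatorial step is then to iteratively eliminate cycles in the support graph $\Gamma=\{\{i,j\}: c_{ij}\ne 0\}$. For a cycle $i_1,\ldots,i_k,i_1$ in $\Gamma$, I would pick an index $l_0$ maximizing $d_{l_0}:=d(x_{i_{l_0}},x_{i_{l_0+1}})$ and rotate the flow around the cycle by $\pm t$ with $|t|=|c_{i_{l_0},i_{l_0+1}}|$, with signs chosen so that the edge at $l_0$ gets coefficient $0$. Divergences at every vertex are preserved by such a rotation. For any other cycle edge $l$, the strong triangle inequality gives
$$|c_{i_l,i_{l+1}}\mp t|\cdot d_l \le \max\{|c_{i_l,i_{l+1}}|d_l,\,|t|d_l\}\le M,$$
using $|t|d_l\le |t|d_{l_0}\le M$ because $d_l\le d_{l_0}$. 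So the max-cost stays $\le M$, the support strictly shrinks, and after finitely many iterations $\Gamma$ is a spanning forest on $\operatorname{supp}(u)$ (with $\textbf{0}$ adjoined as a vertex if $u\notin L^0_{\mathbb F}(X)$).

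On this forest, the divergence equations $\sum_j c_{ij}-\sum_j c_{ji}=\lambda_i$ determine each edge flow uniquely as a $\mathbb Z$-linear combination of the $\lambda_i$'s; since $\lambda_i\in G$ and $G$ is an additive subgroup, every $c_{ij}$ lies in $G$. The main obstacle is the cost bound in the rotation step, which crucially uses both the choice of the longest cycle edge and the NA strong triangle inequality; once this bound is in place, termination and $G$-valuedness of the final forest decomposition are automatic.
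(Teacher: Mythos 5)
Your proof is correct, but it takes a genuinely different route from the paper's. The paper argues locally: it fixes a point $a_1$ carrying a coefficient $\mu_1\notin G$, uses the subgroup property of $G$ together with the divergence identity at $a_1$ to locate a second term meeting $a_1$ whose coefficient $\mu_j$ is also outside $G$ and satisfies $|\mu_j|\le|\mu_1|$, and merges the two terms so that the number of non-$G$ coefficients at $a_1$ strictly drops; iterating over all points produces a $G$-valued decomposition of no larger max-cost. You instead argue globally: after passing to the support and merging parallel and antiparallel edges (as in Claims 2 and 3 of Theorem \ref{t:AE}), you cancel cycles by rotating flow by the coefficient of a cycle edge of maximal $d$-length, which annihilates that edge while the strong triangle inequalities for $|\cdot|$ and for $d$ keep every other edge cost below $M$; on the resulting forest the divergence equations force each edge flow (by leaf-stripping) to be a $\{0,\pm 1\}$-combination of the vertex divergences, hence an element of $G$. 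Both arguments are sound, but yours buys more: the terminal decompositions live in the finite set of flows supported on forests over $\operatorname{supp}(u)$, each uniquely determined by the divergences, so your argument simultaneously yields the local $G_u$-value property (Lemma \ref{cor:fromg}) and the attainment of the infimum (Theorem \ref{t:attaining}) without the finiteness analysis of the sets $A_{ab}$ via Ostrowski's theorem carried out in Lemmas \ref{thm:min} and \ref{lem:fin}. The only details worth spelling out are the leaf-stripping uniqueness of a flow on a tree with prescribed divergences, and the observation that the divergence at the adjoined vertex $\mathbf{0}$, namely $-\sum_{i=1}^{n}\lambda_i$, also lies in $G$ because $G$ is an additive subgroup.
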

	\begin{proof}
		It is equivalent to show that for every decomposition $u=\sum\limits_{j=1}^{m}\mu_j (a_j-b_j)$  there exists
		a decomposition $u=\sum\limits_{k=1}^{l}\rho_k(s_k-t_k)$ with $\rho_k\in  G \ \forall k:  1\leq k \leq l$ such that $$\max \limits_{1\leq k\leq l} |\rho_k| d(s_k,t_k) \leq \max \limits_{1\leq j\leq m} |\mu_j| d(a_j,b_j).$$
		
		 By  deleting any term of $u$ of the form $\mu_j(x-x)$ we may assume that $a_j\neq b_j \ \forall j.$   If $\mu_j\in G \ \forall j:  1\leq j\leq m$ there is nothing to prove. So, without loss of generality, we may assume that $\mu_1\notin G$.
		
		 Moreover we can suppose that $a_1\neq \mathbf{0}$ (otherwise, write the summand $(-\mu_1)(b_1-a_1)$ instead of $\mu_1(a_1-b_1)$).
		 Consider the set of indices $$A:=\{j\neq 1: a_j=a_1 \vee b_j=a_1\}.$$
		 We show that there exists $j\in A$ such that $\mu_j\notin G.$ If $a_1\in \operatorname{supp}(u)$ then there exists $1\leq i\leq n$ such that  $a_1=x_i.$ Hence,
		 $$\mu_1+\sum\limits_{j\in A}{k_j}\mu_j=\lambda_i$$
		 where $k_j=1$ if $a_j=a_1$ and $k_j=-1$ if $b_j=a_1.$
	If  $a_1\notin \operatorname{supp}(u)$ then  $$\mu_1+\sum\limits_{j\in A}{k_j}\mu_j=0_\mathbb{F}.$$  Since $G$ is an additive subgroup of  $\mathbb F, \ \mu_1\notin G$ and $\{0_{\mathbb{F}},\lambda_i\}\subseteq G$, we conclude  that  there exists $j\in A$ such that $\mu_j\notin G.$
	
	Since $|\mu_j|=|-\mu_j|, |\mu_1|=|-\mu_1|$ we may assume, without loss of generality,  that there exists $j\neq 1$ such that $b_j=a_1, \ \mu_j\notin G$ and $|\mu_j|\leq |\mu_1|.$ Replace the terms $\mu_1(a_1-b_1)$ and $\mu_j (a_j-a_1)$
with $\mu_j(a_j-b_1)$ and $(\mu_1-\mu_j)(a_1-b_1).$ 
We show that $$\max \{|\mu_j|d(a_j,b_1),|\mu_1-\mu_j|d(a_1,b_1)\}\leq \max \{|\mu_j|d(a_j,a_1),|\mu_1|d(a_1,b_1)\}.$$ This way we decrease the number of terms in which the element $a_1$ appears with scalar coefficient not from $G.$ Since $|\mu_j|\leq |\mu_1|$ it follows from the strong triangle inequality of the valuation $|\cdot|$ that
\begin{align*}
|\mu_1-\mu_j|d(a_1,b_1) &\leq \max \{|\mu_1|d(a_1,b_1),|\mu_j|d(a_1,b_1)\}=|\mu_1|d(a_1,b_1)\leq \\ &\leq \max \{|\mu_j|d(a_j,a_1),|\mu_1|d(a_1,b_1)\}.
\end{align*}
From  the strong triangle inequality of $d$ we obtain $$|\mu_j|d(a_j,b_1)\leq \max \{|\mu_j|d(a_j,a_1),|\mu_j|d(a_1,b_1)\}\leq\max \{|\mu_j|d(a_j,a_1),|\mu_1|d(a_1,b_1)\}.$$  Therefore,
$$\max \{|\mu_j|d(a_j,b_1),|\mu_1-\mu_j|d(a_1,b_1)\}\leq \max \{|\mu_j|d(a_j,a_1),|\mu_1|d(a_1,b_1)\}.$$ Applying finitely many substitutions of this form to
terms in which the element $a_1$ appears and  in which the coefficients are not taken from $G,$ we obtain a decomposition in which all coefficients of $a_1$
(if there are any) are from $G.$  Repeating this algorithm for other elements, if necessary,   we obtain a decomposition of the form $$u=\sum\limits_{k=1}^{l}\rho_k(s_k-t_k)$$ with $\rho_k\in  G \ \forall k:  1\leq k \leq l$ such that $$\max \limits_{1\leq k\leq l} |\rho_k| d(s_k,t_k) \leq \max \limits_{1\leq j\leq m} |\mu_j| d(a_j,b_j).$$
\end{proof}

\begin{notation}
For every $u= \sum\limits_{i=1}^{n}\lambda_i x_i\in L_\mathbb{F}(X)$ (normal form) denote by $G_u$ the additive subgroup of $\mathbb F$ generated by the coefficients $\lambda_i$ of $u$.
\end{notation}

Observe that by the strong triangle inequality for every $c \in G_u$ we have
\begin{equation} \label{eq:r}
|c| \leq r:=\max\{|\lambda_i|: 1\leq i\leq n\}.
\end{equation}

\begin{lemma} \label{cor:fromg} \emph{(NA local $G_u$-value property)}
	For every $u= \sum\limits_{i=1}^{n}\lambda_i x_i\in L_\mathbb{F}(X)$
	we have
	$$||u||=\inf\bigg \{\max \limits_{1\leq i,j \leq m}|c_{ij}|d(x_i,x_j): 	c_{ij} \in G_u, \ \ \forall i:  1\leq i\leq n  \ \sum\limits_{j=1}^{m}c_{ij}- \sum\limits_{j=1}^{m}c_{ji}=\lambda_i  \bigg\}.$$
\end{lemma}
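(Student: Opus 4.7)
The plan is to deduce Lemma \ref{cor:fromg} from Lemma \ref{l:GenDigital} combined with the reductions already used inside the proof of Theorem \ref{t:AE}. Denote the infimum on the right-hand side by $I_{G_u}$. One direction is immediate: since $G_u \subseteq \mathbb F$, every admissible matrix for $I_{G_u}$ is admissible for the infimum in Theorem \ref{t:AE}.3, and therefore $\|u\| \leq I_{G_u}$. The work is in the reverse inequality $I_{G_u} \leq \|u\|$.

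To establish it, I would fix $\varepsilon > 0$ and start from an arbitrary decomposition $u = \sum_{j=1}^{m}\mu_j(a_j - b_j)$ with $\mu_j \in \mathbb F$, $a_j, b_j \in \overline{X}$, and max-cost $\max_j |\mu_j|\,d(a_j,b_j) \leq \|u\| + \varepsilon$; such a decomposition exists by the definition of $\|u\|$. Noting that $\lambda_i \in G_u$ for all $i$, I apply Lemma \ref{l:GenDigital} with $G := G_u$ to replace this by a new decomposition $u = \sum_{k=1}^{l}\rho_k(s_k - t_k)$ with $\rho_k \in G_u$, $s_k,t_k \in \overline{X}$, still of max-cost at most $\|u\| + \varepsilon$.

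Next I would feed this $G_u$-decomposition into the support-reduction procedure from the proof of Claim 2 of Theorem \ref{t:AE}. The key observation is that every operation performed there on the coefficients is an additive one: deletion of zero or trivial terms, sign change $s \mapsto -s$, and the substitutions $\lambda(x-z), \mu(z-y) \mapsto \lambda(x-y), (\mu-\lambda)(z-y)$ in reductions (3) and (4). Since $G_u$ is an additive subgroup of $\mathbb F$, it is closed under negation and under the difference $\mu - \lambda$, so every coefficient in the resulting decomposition remains in $G_u$ while the max-cost does not increase. Thus we obtain $u = \sum_\ell \rho'_\ell(s'_\ell - t'_\ell)$ with $\rho'_\ell \in G_u$, $s'_\ell, t'_\ell \in \operatorname{supp}(u)$, and max-cost still bounded by $\|u\| + \varepsilon$.

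Finally I would merge parallel terms exactly as in the proof of Claim 3: two summands $\lambda(x_i - x_j)$ and $\mu(x_i - x_j)$ are replaced by $(\lambda + \mu)(x_i - x_j)$, which stays in $G_u$ by closure under addition, and whose norm satisfies $|\lambda + \mu| \leq \max\{|\lambda|,|\mu|\}$ by the strong triangle inequality. This produces a matrix $(c_{ij}) \in G_u^{m \times m}$ with $c_{ii} = 0_{\mathbb F}$ and $\sum_{j}c_{ij} - \sum_{j}c_{ji} = \lambda_i$ for every $1 \leq i \leq n$, whose max-cost is at most $\|u\| + \varepsilon$. Hence $I_{G_u} \leq \|u\| + \varepsilon$; letting $\varepsilon \to 0$ yields $I_{G_u} \leq \|u\|$ and completes the proof. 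The only point that requires any care is checking that each reduction leaves coefficients inside $G_u$, but this is immediate because the reductions never multiply coefficients by scalars outside of $\{-1, 0, 1\}$.
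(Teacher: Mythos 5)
Your proof is correct and follows essentially the same route as the paper: apply Lemma \ref{l:GenDigital} to pass to a decomposition with coefficients in $G_u$, then run the reductions of Claims 2 and 3 of Theorem \ref{t:AE}, observing that each reduction only forms sums, differences and negations of existing coefficients and hence preserves membership in the additive subgroup $G_u$. The paper's proof is exactly this observation, stated more tersely.
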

\begin{proof}
	Combine Lemma \ref{l:GenDigital} with  Claims $2,3$ of Theorem \ref{t:AE} taking into account the following
	 observation. Let  $u=\sum\limits_{k=1}^{l}\rho_k(s_k-t_k)$ with $\rho_k\in  G \ \forall k:  1\leq k \leq l.$
Since $G$ is an additive subgroup of $\mathbb F$, each reduction appearing in the proof of Claim $2$ yields a decomposition of the same form. That is, the coefficients in the resulting  decomposition are   from $G.$
	\end{proof}

\begin{lemma} \label{thm:min}
	Let $u=\sum\limits_{i=1}^{n}\lambda_i x_i\in  L_\mathbb{F}(X)$.
	Suppose that for every positive reals $a \leq b$ the set $A_{ab}:=\{|x|:  x\in G_u, \ a\leq |x|\leq b \}$ is finite.
	Then
	$$||u||=\min \bigg \{\max \limits_{1\leq i,j\leq m}|c_{ij}|d(x_i,x_j):
	c_{ij} \in G_u, \forall i:  1\leq i\leq n \ \sum\limits_{j=1}^{m}c_{ij}- \sum\limits_{j=1}^{m}c_{ji}=\lambda_i   \bigg \}.$$
\end{lemma}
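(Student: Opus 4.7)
The plan is to combine Lemma \ref{cor:fromg} with a local-finiteness argument on the possible values of the cost function. Write $V$ for the set of matrices $(c_{ij}) \in G_u^{m \times m}$ satisfying $c_{ii} = 0_\mathbb{F}$ and the balance equations $\sum_j c_{ij} - \sum_j c_{ji} = \lambda_i$ for $1 \leq i \leq n$, and put $f(c) := \max_{1 \leq i, j \leq m} |c_{ij}|\,d(x_i, x_j)$. By Lemma \ref{cor:fromg} we have $||u|| = \inf f(V)$, and the goal is to show that this infimum is attained.

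The first step is to show that the range $f(V)$ is locally finite away from $0$. Let $T := \{|c|\,d(x_i, x_j) : c \in G_u \setminus \{0_\mathbb{F}\},\ 1 \leq i, j \leq m,\ d(x_i, x_j) > 0\}$; clearly $f(V) \subseteq \{0\} \cup T$. For any $0 < a \leq b$ and any pair $(i, j)$ with $d_{ij} := d(x_i, x_j) > 0$, the subset of $T$ coming from this pair and lying in $[a, b]$ is parametrized by the finite set $A_{a/d_{ij},\, b/d_{ij}}$. Taking the union over the finitely many such pairs, $T \cap [a, b]$ is finite, so $T$ has no accumulation point in $(0, \infty)$.

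Set $\alpha := ||u||$. If $\alpha > 0$, choose a minimizing sequence $v^{(k)} \in V$; eventually $f(v^{(k)}) \in T \cap [\alpha/2,\, 2\alpha]$, which is finite, so $f(v^{(k)}) = \alpha$ for all sufficiently large $k$ and the minimum is attained. If $\alpha = 0$, I would invoke Claim $6$ of Theorem \ref{t:AE}, whose proof in fact produces a decomposition $u = \sum_{k=1}^{t} s_k(a_k - b_k)$ with $d(a_k, b_k) = 0$, $a_k, b_k \in \{x_1, \dots, x_m\}$, and $s_k \in G_u$ (every reduction there merely deletes a term or replaces two normal-form coefficients by their sum, so coefficients never leave $G_u$). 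Setting $c_{ij} := \sum\{s_k : a_k = x_i,\, b_k = x_j\}$ for $i \neq j$ and $c_{ii} := 0_\mathbb{F}$ yields a matrix in $V$ whose only nonzero entries sit at pairs $(i, j)$ with $d(x_i, x_j) = 0$, so $f(c) = 0$.

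The main obstacle is the case $\alpha = 0$: since $T$ can genuinely accumulate at $0$ (already for the Levi-Civita field, where small $|c|$ arises from elements of large minimum support), the discreteness argument of the preceding case does not apply and one must read off a concrete zero-cost admissible matrix from the structural decomposition of Claim $6$ in Theorem \ref{t:AE}, carefully tracking that the coefficients produced never leave $G_u$.
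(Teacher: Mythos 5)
Your proposal is correct and follows essentially the same route as the paper: the case $||u||=0$ is handled by extracting a zero-cost admissible matrix with entries in $G_u$ from the reductions in Claim 6 of Theorem \ref{t:AE}, and the case $||u||>0$ by a finiteness argument based on the hypothesis on $A_{ab}$ together with Lemma \ref{cor:fromg}. The only (cosmetic) difference is that you prove local finiteness of the whole value set $T$ in $(0,\infty)$ directly, whereas the paper argues by contradiction, passing to a subsequence with a fixed maximizing pair $(i_0,j_0)$ and using the bound $|c|\leq r$ from (\ref{eq:r}) to contradict the finiteness of a single $A_{ab}$.
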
	
\begin{proof} 
In case $||u||=0$ we do not need the finiteness assumption. Indeed, by  the proof  of Claim 6  of Theorem \ref{t:AE}  there exists a matrix $(c_{ij})\in  G_u^{m\times m}$ such that 
$$u=\sum\limits_{i=1}^{m}\sum\limits_{j=1}^{m}c_{ij}(x_i-x_j),$$ where for every $i,j$ either $d(x_i,x_j)=0$ or $c_{ij}=0_{\mathbb F}$. It follows that   $$\sum\limits_{j=1}^{m}c_{ij}- \sum\limits_{j=1}^{m}c_{ji}=\lambda_i \ \forall i: 1\leq i\leq n$$ and thus the infimum in 
Lemma \ref{cor:fromg} is attained. So without restriction of generality we may assume 
that $||u|| > 0$. 
	We have to show that the infimum in 
	Lemma \ref{cor:fromg} is attained.  Assuming the contrary and taking into account Formula (\ref{eq:r}),
	there exists a sequence of matrices $$\{(c_{ij}^k):k\in \Bbb N \}\subseteq G_u^{m\times m}$$ with the following properties:
	\begin{enumerate}
	\item $\forall i,j,k \ \ |c_{ij}^k| \leq r$;
	\item $\forall k\in \Bbb N \ \forall i:  1\leq i\leq n \ \ \sum\limits_{j=1}^m c_{ij}^k-\sum\limits_{j=1}^m c_{ji}^k=\lambda_i;$
	\item $\max\limits_{1\leq i,j\leq m}|c_{ij}^k|d(x_i,x_j)>\max\limits_{1\leq i,j\leq m}|c_{ij}^{k+1}|d(x_i,x_j)>||u||.$  \end{enumerate}
	Passing to a subsequence, if necessary, we can also assume that there exists a pair of indices $(i_0,j_0)$ such that  $$ \forall k\in \Bbb N \  \max\limits_{1\leq i,j\leq m}|c_{ij}^k|d(x_i,x_j)=|c_{i_0j_0}^k|d(x_{i_0},x_{j_0}).$$  
	It follows that
	$$\forall k\in \Bbb N \ \ \ r\geq |c_{i_0j_0}^k|>|c_{i_0j_0}^{k+1}|>\frac{||u||}{d(x_{i_0},x_{j_0})}>0.$$
	By our assumption the set 
	$$A=\bigg\{|x|: x\in G_u, \ r\geq |x| \geq \frac{||u||}{d(x_{i_0},x_{j_0})}\bigg\}$$ is finite. This contradicts the fact that the set $\{|c_{i_0j_0}^k|: k\in \Bbb N\}$, being a strictly decreasing sequence, is infinite.
\end{proof}

By $\operatorname{char}(\mathbb F)$ we denote the characteristic of the field $\mathbb F$. Recall that if $\operatorname{char}(\mathbb F)=0$ then the field $\Bbb Q$ of rationals is naturally embedded in $\mathbb F$. 

\begin{lemma}\label{lem:fin}
Let $(\mathbb{F},|\cdot|)$ be an NA  valued field with $\operatorname{char}(\mathbb F)=0$. Then,
for every positive reals $a \leq b$ the set $\{|q|: \ a\leq |q|\leq b, \ q\in\Bbb Q \}$ is finite.
\end{lemma}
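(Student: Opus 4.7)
The plan is to reduce the problem to the restricted valuation on $\Bbb Q$ (which makes sense because $\operatorname{char}(\mathbb F)=0$ embeds $\Bbb Q$ in $\mathbb F$) and then invoke Ostrowski's theorem. The set in question is exactly the value group $|\Bbb Q^\times|$ intersected with the compact interval $[a,b]$, so the whole statement will follow once we show that $|\Bbb Q^\times|$ is discrete in $\Bbb R_{>0}$.

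First I would split into two cases according to whether the restricted valuation on $\Bbb Q$ is trivial. If it is trivial, then $|q|\in\{0,1\}$ for every $q\in\Bbb Q$, so $\{|q|: a\leq |q|\leq b\}$ contains at most one element (namely $1$, and only if $a\leq 1\leq b$), and we are done. If it is not trivial, then by Ostrowski's Theorem \ref{Ostrowski} there exist a prime $p$ and a constant $c>0$ such that $|q|=|q|_p^{\,c}$ for all $q\in\Bbb Q$. Consequently the value group of the restricted valuation is
\[
|\Bbb Q^\times|=\{p^{-kc}: k\in\Bbb Z\},
\]
which is a nontrivial discrete cyclic subgroup of $\Bbb R_{>0}$.

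Finally, I would note that the intersection of the discrete set $\{p^{-kc}:k\in\Bbb Z\}$ with the compact interval $[a,b]$ is finite: only those $k\in\Bbb Z$ satisfying $-\log b/(c\log p)\leq k\leq -\log a/(c\log p)$ contribute, and this is a bounded set of integers. Therefore $\{|q|: q\in\Bbb Q,\ a\leq |q|\leq b\}$ is finite in either case.

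There is really no serious obstacle here beyond correctly citing Ostrowski; the only subtlety worth mentioning is that the statement concerns \emph{absolute values of rationals}, not the rationals themselves (there are of course infinitely many $q\in\Bbb Q$ with $a\leq |q|\leq b$ in the nontrivial case, but they collapse to finitely many distinct values). This distinction is exactly what makes the conclusion strong enough to feed Lemma \ref{thm:min} through the hypothesis that $A_{ab}$ is a finite set of \emph{values}.
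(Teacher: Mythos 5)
Your proof is correct and follows essentially the same route as the paper: invoke Ostrowski's theorem to see that the restricted valuation on $\Bbb Q$ has discrete value group, then observe that a discrete subset of the compact interval $[a,b]$ is finite. Your explicit treatment of the trivial-valuation case is a small improvement in precision, since Ostrowski's Theorem as stated applies only to non-trivial valuations, whereas the paper's one-line proof subsumes that case implicitly.
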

\begin{proof}
By  Ostrowski's Theorem \ref{Ostrowski}  the restricted valuation on $\Bbb Q \subseteq \mathbb F$ is discrete. Hence, the set $\{|q|: q\in \Bbb Q\setminus \{0_\mathbb F\}\}$ is closed and discrete.
 It follows that for any positive reals $a \leq b$
 the set $\{|q|: \ a\leq |q|\leq b, \ q\in \Bbb Q \}$ is compact and discrete and thus finite.
	\end{proof}
	
	\begin{thm}[{Min-attaining Theorem}] \label{t:attaining}
		Let $(\mathbb{F},|\cdot|)$ be an NA valued field. 	Let $u=\sum\limits_{i=1}^{n}\lambda_i x_i\in  L_\mathbb{F}(X)$.	 Then, 	$$||u||=\min \bigg \{\max \limits_{1\leq i,j\leq m}|c_{ij}|d(x_i,x_j):
		c_{ij} \in G_u, \ \forall i:  1\leq i\leq n \ \sum\limits_{j=1}^{m}c_{ij}- \sum\limits_{j=1}^{m}c_{ji}=\lambda_i   \bigg \}.$$
	\end{thm}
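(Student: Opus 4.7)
My plan is to deduce Theorem~\ref{t:attaining} by combining Lemma~\ref{cor:fromg} (which already identifies $\|u\|$ with the infimum in the statement) with Lemma~\ref{thm:min} (which upgrades that infimum to a minimum under a finiteness hypothesis on $G_u$). So the whole task reduces to verifying the hypothesis of Lemma~\ref{thm:min}: for every positive reals $a\le b$ the set $A_{ab}=\{|x|:x\in G_u,\ a\le|x|\le b\}$ must be finite for every $u\in L_\mathbb{F}(X)$ and every NA valued field $\mathbb F$.

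I would split into two cases according to $\operatorname{char}(\mathbb F)$. If $\operatorname{char}(\mathbb F)=p>0$, then $p\cdot x=0_\mathbb{F}$ for all $x\in\mathbb F$, so $G_u$ is a finitely generated $\mathbb F_p$-vector space, hence \emph{finite}; in particular $A_{ab}$ is trivially finite and Lemma~\ref{thm:min} applies.

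If $\operatorname{char}(\mathbb F)=0$, I would invoke Ostrowski's Theorem~\ref{Ostrowski} together with Lemma~\ref{lem:fin} to see that $|\mathbb Q^*|$ is a discrete subset of $\mathbb R_{>0}$; consequently the completion $\hat{\mathbb Q}$ of $\mathbb Q$ inside the completion $\hat{\mathbb F}$ of $\mathbb F$ is either $\mathbb Q$ with the trivial valuation or the locally compact field $\mathbb Q_p$, and $|\hat{\mathbb Q}^*|$ is still discrete in $\mathbb R_{>0}$. I would then pass to the finite-dimensional $\hat{\mathbb Q}$-subspace $\hat V\subseteq\hat{\mathbb F}$ spanned by $\lambda_1,\ldots,\lambda_n$. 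Since $\hat{\mathbb Q}$ is spherically complete in both subcases, $\hat V$ admits an orthogonal basis $e_1,\ldots,e_k$, meaning $\bigl|\sum_j c_j e_j\bigr|=\max_j|c_j|\,|e_j|$ for all $c_j\in\hat{\mathbb Q}$. Expressing $\lambda_i=\sum_j q_{ij}e_j$ with $q_{ij}\in\hat{\mathbb Q}$, every $x=\sum_i n_i\lambda_i\in G_u$ takes the form $\sum_j c_j e_j$ with $c_j=\sum_i n_iq_{ij}\in\hat{\mathbb Q}$, whence $|x|=\max_j|c_j|\,|e_j|$ lies in the discrete set $\{0\}\cup\bigcup_j|\hat{\mathbb Q}^*|\cdot|e_j|$. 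Finiteness of $A_{ab}$ follows, and Lemma~\ref{thm:min} again completes the argument.

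The main obstacle is invoking the existence of orthogonal bases in finite-dimensional NA normed spaces over spherically complete valued fields; this is a classical result in NA functional analysis (see, for instance, \cite{Schn}) but is external to the paper's own developments. A self-contained induction on the rank of $G_u$ as an abelian group can reproduce the same discreteness in the non-cancellation cases $|y_k|\ne|m_k\lambda_n|$, but it stalls in the cancellation case $|y_k|=|m_k\lambda_n|$, where essentially the same orthogonality input becomes unavoidable.
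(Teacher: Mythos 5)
Your reduction to Lemma~\ref{cor:fromg} and Lemma~\ref{thm:min} is exactly the paper's strategy, and your positive-characteristic case coincides with the paper's ($G_u$ is a finite $\mathbb F_p$-space). In characteristic zero, however, you take a genuinely different and correct route: completing $\mathbb F$, passing to $\widehat{\mathbb Q}$, and invoking the existence of orthogonal bases for finite-dimensional normed spaces over spherically complete fields, so that $|G_u|$ lands in the finite union $\bigcup_j|\widehat{\mathbb Q}^*|\cdot|e_j|$ of discrete sets. This works (the orthogonal-basis theorem is standard, cf.\ \cite{PS}), but it imports machinery external to the paper. The paper instead proves finiteness of $B_{ab}:=\{|x|:x\in\widetilde{G_u},\ a\le|x|\le b\}$, where $\widetilde{G_u}$ is the $\mathbb Q$-span of the $\lambda_i$, by induction on $n$ using only Lemma~\ref{lem:fin} and the strong triangle inequality; the cancellation case that stalled your elementary attempt is dispatched by the choice of pivot: pick $t\in\widetilde{G_u}\setminus\widetilde{G_v}$ with $a\le|t|\le b$ and $|t|\notin C_{ab}$ (if none exists there is nothing to prove); then for $s=qt+r$ with $r\in\widetilde{G_v}$ one cannot have $|qt|=|r|$, since that would force $|t|=|r/q|\in C_{ab}$, so $|s|=\max\{|qt|,|r|\}\in C_{ab}\cup D$ with $D=\{|qt|:q\in\mathbb Q,\ a\le|qt|\le b\}$ finite. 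Your approach buys conceptual transparency; the paper's buys self-containment and avoids spherical completeness altogether.
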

	\begin{proof}
		We show that Lemma \ref{thm:min} can be applied to every NA valued field $(\mathbb{F},|\cdot|)$  and to every $u=\sum\limits_{i=1}^{n}\lambda_i x_i\in  L_\mathbb{F}(X)$.
		
		$\blacktriangleright$ In case  $\operatorname{char}(\mathbb F)>0$   the subgroup $G_u$ is finite, being a finitely generated additive subgroup of a field of positive characteristic. So,   it is trivial that the set $A_{ab}$ from Theorem \ref{thm:min} is finite.
		
		$\blacktriangleright$ Now assume that $\operatorname{char}(\mathbb F)=0.$ Instead  of showing directly that the set  $$A_{ab}:=\{|x|:  x\in G_u, \ a\leq |x|\leq b \}$$ is finite for every positive reals $a \leq b$, we will show that it is contained in a finite subset  $B_{ab}$ of $\Bbb R$. Let $$B_{ab}:=\{|x|: x\in \widetilde{G_u}, \ a\leq |x|\leq b\}$$ where
		 $\widetilde{G_u}:=\{\sum\limits_{i=1}^{n}m_i\lambda_i| \ m_i\in \Bbb Q\}.$  Since $G_u\subseteq \widetilde{G_u}$ we also have $A_{ab}\subseteq  B_{ab}.$ We prove the finiteness of the set $B_{ab}$ using induction on $n,$ the number of scalar coefficients $\lambda_i$ in the normal form of $u.$
		
		 First, for the case $n=1$ let $u=\lambda x.$ We show that the set $\{|m\lambda|: a\leq |m\lambda|\leq b, \ m\in \Bbb Q \}$ is finite.  It is equivalent to show that the set $\{|m|: c\leq |m|\leq d,\ m\in \Bbb Q \}$ is finite, where $c=\frac{a}{|\lambda|}, d=\frac{b}{|\lambda|}.$ This set is finite by Lemma \ref{lem:fin}.
		
		 Let $u=\sum\limits_{i=1}^{n+1}\lambda_ix_i$ and $v=\sum\limits_{i=1}^{n}\lambda_ix_i.$ By the induction hypothesis the set  $$C_{ab}:=\{|x|: x\in \widetilde{G_v}, \ a\leq |x|\leq b\}$$ is finite. If
		 $$\{|x|: x\in \widetilde{G_u}\setminus  \widetilde{G_v}, \ a\leq |x|\leq b\}=\emptyset$$ there is nothing to prove.
		
		 So we may assume that there exists an element of $\widetilde{G_u}$ of the form
		 $t=\sum\limits_{i=1}^{n+1}t_i\lambda_i,$ where $t_i\in \Bbb Q \ \forall i,$
		 $\ t_{n+1}\neq 0, \ a\leq |t|\leq b$ and $|t|\notin C_{ab}.$
		It follows from Lemma \ref{lem:fin} that the set $$D:=\{|qt|:q\in \Bbb Q, \  a\leq |qt|\leq b \}$$ is finite. It suffices to show that
		$$\{|x|: x\in \widetilde{G_u} \setminus  \widetilde{G_v}, \ a\leq |x|\leq b\}\subseteq C_{ab}\cup D.$$
		
		 Let $s=\sum\limits_{i=1}^{n+1}s_i\lambda_i\in \widetilde{G_u}\setminus  \widetilde{G_v}$ and
		 $a\leq |s|\leq b$.
		
		  We will show that $|s|\in  C_{ab}\cup D.$
		 Since $s\in \widetilde{G_u}\setminus  \widetilde{G_v}$ then $s_{n+1}\neq 0.$
		 Since  $\ t_{n+1}\neq 0$,
		 it follows that $\exists q\in \Bbb Q \setminus \{0\}$ such that $qt_{n+1}=s_{n+1}.$
		 Thus there exists $r\in \widetilde{G_v}$ such that $s=qt+r.$
		 Clearly $|qt|\neq |r|.$ Indeed, otherwise, we have $|t|=|\frac{1}{q}r|$ contradicting the fact that $|t|\notin C_{ab}.$ So, by the basic properties of the strong triangle inequality, either $|s|=|qt|\in D$ or
		 $|s|=|r|\in C_{ab}.$ Therefore   $B_{ab}\subseteq C_{ab}\cup D$, as needed.
		\end{proof}

\begin{corol} \label{c:min}
The infimum  in Theorem \ref{t:AE} is, in fact, a minimum.
\end{corol}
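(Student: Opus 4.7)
The plan is to read off the corollary directly from the Min-attaining Theorem \ref{t:attaining}, which has already done all the real work. That theorem asserts precisely that the infimum in the matrix formulation of Theorem \ref{t:AE}.3 is actually a minimum; in particular, there exists a matrix $(c_{ij})\in G_u^{m\times m}$ satisfying $\sum_{j=1}^m c_{ij}-\sum_{j=1}^m c_{ji}=\lambda_i$ for every $1\leq i\leq n$ with
\[
\max_{1\leq i,j\leq m}|c_{ij}|\,d(x_i,x_j)=||u||.
\]
This gives immediately the ``min'' version of the third formula in Theorem \ref{t:AE}.

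To deduce that the infimum in Theorem \ref{t:AE}.2 is also attained, I would take any minimizer $(c_{ij})$ supplied by Theorem \ref{t:attaining} and form the associated decomposition
\[
u=\sum_{i=1}^{m}\sum_{j=1}^{m} c_{ij}(x_i-x_j),
\]
which is valid by the observation recorded in the proof of Claim 3 of Theorem \ref{t:AE}. All the $x_i$ lie in $\operatorname{supp}(u)$, so this is a legitimate candidate decomposition for the infimum in part 2, and its associated maximum $\max_{i,j}|c_{ij}|\,d(x_i,x_j)$ equals $||u||$. Hence the infimum in part 2 is attained as well. The same decomposition also witnesses attainment in the original definition of $||u||$ given at the start of the proof of Theorem \ref{t:AE} (with $x_i,y_i\in\overline{X}$), since $\operatorname{supp}(u)\subseteq\overline{X}$.

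There is no real obstacle here: the substantive content, namely the existence of a minimizing matrix with coefficients in the finitely generated subgroup $G_u$, is exactly what Theorem \ref{t:attaining} delivers, and the transition from the matrix formulation to the decomposition formulations is purely formal.
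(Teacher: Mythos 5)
Your proposal is correct and matches the paper's intent exactly: the paper gives no separate argument for Corollary \ref{c:min}, treating it as an immediate consequence of the Min-attaining Theorem \ref{t:attaining} (see also Remark \ref{rem:addthird}), and your conversion of the minimizing matrix into a decomposition witnessing attainment in parts 2 and 3 of Theorem \ref{t:AE} is precisely the formal step the paper leaves implicit.
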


\begin{prop} \label{p:upper}
	For every $u=\sum\limits_{i=1}^{n}\lambda_ix_i \in L_\mathbb F(X)$ we have
	$$r\cdot l_0 \leq ||u||\leq r\cdot l_1 $$ where $r=\max\{|\lambda_i|: 1\leq i\leq n\}$, $l_1=\max\{d(x_i,x_j):\ 1\leq i,j\leq m\}$, $l_0=\min\{d(x_i,x_j):\ 1\leq i \neq j\leq m\}$ and $m=|\operatorname{supp}(u)|.$
\end{prop}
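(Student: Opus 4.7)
The plan is short: the lower bound $r \cdot l_0 \leq ||u||$ is literally the content of Claim 4 inside the proof of Theorem \ref{t:AE}, so I would invoke it directly without reproof.

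For the upper bound $||u|| \leq r \cdot l_1$, I would exhibit an explicit decomposition of $u$ whose associated max-cost realizes the bound, and then appeal to Theorem \ref{t:AE}.2, which allows the infimum defining $||u||$ to be taken over decompositions using only elements of $\operatorname{supp}(u)$. The construction splits naturally into the two cases that Notation \ref{notation} introduces.

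\emph{Case 1: $u \in L^0_\mathbb F(X)$.} Then $\sum_{i=1}^{n}\lambda_i = 0_\mathbb F$ and $\operatorname{supp}(u) = \{x_1, \dots, x_n\}$. Pick any $y \in \operatorname{supp}(u)$, say $y = x_1$. Using $\sum_{i=1}^{n}\lambda_i = 0_\mathbb F$ to add a vanishing multiple of $x_1$, I rewrite
$$u = \sum_{i=1}^{n}\lambda_i x_i = \sum_{i=1}^{n}\lambda_i(x_i - x_1).$$
By Theorem \ref{t:AE}.2 this yields $||u|| \leq \max_{1\leq i\leq n} |\lambda_i|\, d(x_i, x_1) \leq r \cdot l_1$.

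\emph{Case 2: $u \notin L^0_\mathbb F(X)$.} Then by Notation \ref{notation} we have $\operatorname{supp}(u) = \{x_1, \dots, x_n, \mathbf{0}\}$, so $\mathbf{0}$ itself lies in the support and
$$u = \sum_{i=1}^{n}\lambda_i(x_i - \mathbf{0})$$
is a valid decomposition on $\operatorname{supp}(u)$. Theorem \ref{t:AE}.2 again gives $||u|| \leq \max_{1 \leq i \leq n}|\lambda_i|\, d(x_i, \mathbf{0}) \leq r \cdot l_1$.

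There is no serious obstacle here; both inequalities are essentially immediate consequences of results already established. The only mild subtlety is that the natural argument treats $u \in L^0_\mathbb F(X)$ and $u \notin L^0_\mathbb F(X)$ separately, but this is precisely the dichotomy that the convention in Notation \ref{notation}—adjoining $\mathbf{0}$ to $\operatorname{supp}(u)$ exactly when $u \notin L^0_\mathbb F(X)$—is designed to absorb, and it is what makes $l_1$ the correct quantity in both cases.
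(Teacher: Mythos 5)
Your proof is correct. The lower bound is handled identically to the paper (both simply invoke Claim 4 of Theorem \ref{t:AE}), but your upper bound takes a genuinely different and more elementary route. The paper deduces $||u||\leq r\cdot l_1$ from Lemma \ref{cor:fromg} (the NA local $G_u$-value property): since the infimum may be taken over matrices $(c_{ij})$ with entries in $G_u$, and every $c\in G_u$ satisfies $|c|\leq r$ by the strong triangle inequality (Equation (\ref{eq:r})), \emph{every} feasible $G_u$-matrix has max-cost at most $r\cdot l_1$. You instead exhibit one explicit ``star'' decomposition — $u=\sum_i\lambda_i(x_i-x_1)$ when $u\in L^0_\mathbb F(X)$, and $u=\sum_i\lambda_i(x_i-\mathbf{0})$ otherwise — whose max-cost is visibly at most $r\cdot l_1$, and this single feasible decomposition already bounds the infimum. (For this you do not even need Theorem \ref{t:AE}.2; the raw definition of $||\cdot||$ as an infimum over decompositions in $\overline X$ suffices, since your decompositions live in $\operatorname{supp}(u)\subseteq\overline X$.) Your argument buys independence from Section 5: it uses nothing beyond the definition of the ultra-seminorm and the bookkeeping in Notation \ref{notation}, whereas the paper's one-line derivation leans on the previously established $G_u$-value machinery. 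The paper's version, on the other hand, yields the slightly stronger observation that the bound $r\cdot l_1$ is respected by all $G_u$-valued competitors, not just one. Both are complete proofs.
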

\begin{proof}
	Claim 4 of Theorem  \ref{t:AE} provides a lower bound $r\cdot l_0 \leq ||u||.$
	
	By Theorem \ref{cor:fromg}
	$$||u||=\inf \bigg \{\max \limits_{1\leq i,j\leq m}|c_{ij}|d(x_i,x_j):
	c_{ij} \in G_u,   \forall i:  1\leq i\leq n \ \sum\limits_{j=1}^{n}c_{ij}- \sum\limits_{j=1}^{n}c_{ji}=\lambda_i  \bigg\},$$
	while $|c_{ij}| \leq r$	by 
	(\ref{eq:r}). Therefore
	$||u|| \leq \max \limits_{1\leq i,j\leq m}|c_{ij}|d(x_i,x_j)\leq r\cdot l_1.$
\end{proof}

\begin{corol} \label{c:special}
	Let $u=\sum\limits_{i=1}^{n}\lambda_ix_i \in L_\mathbb F(X)$.
	Suppose that $l=d(x_i,x_j)$ for every $x_i \neq x_j \in \operatorname{supp}(u)$. Then $||u||=r \cdot l$ where $r=\max\{|\lambda_i|: 1\leq i\leq n\}$.
\end{corol}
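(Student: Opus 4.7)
The plan is to obtain the corollary as a direct consequence of Proposition \ref{p:upper}. The key observation is that the hypothesis $d(x_i,x_j)=l$ for every pair of distinct $x_i,x_j \in \operatorname{supp}(u)$ forces the two quantities bounding $||u||$ in that proposition to coincide. Specifically, whenever $m=|\operatorname{supp}(u)|\geq 2$, both
\[
l_0 = \min\{d(x_i,x_j): 1\leq i\neq j\leq m\} \quad \text{and} \quad l_1 = \max\{d(x_i,x_j): 1\leq i,j\leq m\}
\]
equal $l$ (the maximum is attained on an off-diagonal pair since $l\geq 0$, so the diagonal zero entries are irrelevant). Proposition \ref{p:upper} then yields
\[
r\cdot l \;=\; r\cdot l_0 \;\leq\; ||u|| \;\leq\; r\cdot l_1 \;=\; r\cdot l,
\]
forcing $||u|| = r\cdot l$.

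The only remaining issue is to ensure that we are always in the situation $m\geq 2$. If $u=\mathbf{0}$ the equality holds trivially since both sides vanish. Otherwise the normal form $u=\sum_{i=1}^{n} \lambda_i x_i$ has all $\lambda_i\neq 0_\mathbb{F}$: if $u\in L^0_\mathbb{F}(X)$ a single non-zero coefficient cannot sum to zero, so $n\geq 2$ and $m=n\geq 2$; if $u\notin L^0_\mathbb{F}(X)$ then by Notation \ref{notation} the support is enlarged by $\mathbf{0}$, giving $m=n+1\geq 2$. Hence the bracket of Proposition \ref{p:upper} is non-empty in every non-trivial case.

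There is no genuine obstacle here: all the substantive work has already been done inside Theorem \ref{t:AE} and Proposition \ref{p:upper}. The statement is essentially the observation that when the two sides of the sandwich in Proposition \ref{p:upper} agree, the Kantorovich ultra-norm is pinned down exactly, and the equidistant hypothesis is precisely what achieves this.
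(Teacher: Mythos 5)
Your proof is correct and follows exactly the route the paper intends: Corollary \ref{c:special} is stated immediately after Proposition \ref{p:upper} precisely because the equidistance hypothesis forces $l_0=l_1=l$, collapsing the sandwich $r\cdot l_0\leq ||u||\leq r\cdot l_1$. Your additional care with the degenerate cases ($u=\mathbf{0}$ and ensuring $m\geq 2$) is a harmless refinement of the same argument.
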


 \section{Free NA locally convex space}\label{s:freeLCS}

For the free locally convex $\mathbb{F}$-spaces
(where $\mathbb F= \Bbb R$ or $\Bbb C$) on uniform spaces we refer to Raikov \cite{MPV}. Here we consider their NA analogue.
 Let $\mathbb{F}$ be an NA valued field.
  Recall \cite{Schn,PS} that
 a Hausdorff
 NA $\mathbb{F}$-vector space $V$ is said to be \emph{locally convex} if its topology can be generated by a family of ultra-seminorms.

 Assigning to every
 NA locally convex $\mathbb{F}$-space $V$ its uniform space $(V,{\mathcal U}),$ we define a forgetful functor
 from the category $_\mathbb{F}$LCS$_{\scriptscriptstyle\mathcal{NA}}$
 of all Hausdorff
 NA locally convex spaces to the category of all NA Hausdorff uniform spaces
 $\mathbf{Unif}_{\scriptscriptstyle\mathcal{NA}}$.

 \begin{defi} \label{d:FreeGr}
 	Let $\mathbb{F}$ be an NA valued field and
 	$(X,{\mathcal U}) \in \mathbf{Unif}_{\scriptscriptstyle\mathcal{NA}}$ be an NA uniform space. By a
 	\emph{free NA locally convex $\mathbb{F}$-space} of $(X,{\mathcal U})$ we
 	mean a pair $(L_\mathbb{F}(X,{\mathcal U}),i)$ (or, simply,
 	$L_\mathbb{F}(X,{\mathcal U})$ or $L_\mathbb{F}(X)$ when $i$ and ${\mathcal U}$ are understood), where
 	$L_\mathbb{F}(X,{\mathcal U})$ is a locally convex $\mathbb{F}$-space
 	and $i: X \to L_\mathbb{F}(X,{\mathcal U})$ is a uniform map
 	satisfying the following universal property. For every uniformly
 	continuous map \textbf{$\varphi: (X,{\mathcal U}) \to V$} into a locally convex
 	$\mathbb{F}$-space $V$, there exists a unique
 	continuous linear homomorphism \textbf{$\Phi: L_\mathbb{F}(X,{\mathcal U})
 		\to V$} for which the following diagram commutes:
 	\begin{equation*} \label{equ:ufn}
 	\xymatrix { (X,{\mathcal U}) \ar[dr]_{\varphi} \ar[r]^{i} & L_\mathbb{F}(X,{\mathcal U})
 		\ar[d]^{\Phi} \\
 		& V }
 	\end{equation*}
 \end{defi}

 A categorical reformulation of this definition is that
 $i: X \to L_\mathbb{F}(X,{\mathcal U})$ is a universal arrow from $(X,{\mathcal U})$
 to the forgetful functor
 $_\mathbb{F}$LCS$_{\scriptscriptstyle\mathcal{NA}} \to \mathbf{Unif}_{\scriptscriptstyle\mathcal{NA}}$.
 The uniformity $\overline{\mathcal U }$ in the following theorem is obtained from the uniformity $\mathcal{U}$ by adding to $X$ the element $\mathbf{0}$ as an  isolated point. In particular, if $\mathcal U$ is metrizable and $d$ is the corresponding ultra-metric,  one can extend $d$ from $X$ to
 $\overline{X}$ such that $d$ induces the uniformity $\overline{\mathcal U}$
 (apply Lemma \ref{l:extend}).

 \begin{thm} \label{t:freeLCS}
 	For every Hausdorff NA uniform space $(X,{\mathcal U})$
 	the uniform NA free locally convex $\mathbb{F}$-space exists. Its structure can be defined as follows. Let $D$ be the set of all 
 	 $\overline{\mathcal U}$-uniformly continuous ultra-pseudometrics on $\overline{X}:=X\cup \{\mathbf{0}\}$. For every $d \in D$ we have the corresponding Kantorovich ultra-seminorm $||\cdot||_d$ on $L_\mathbb{F}(X).$
 	Then $L_\mathbb{F}(X)$ endowed with the family $\Gamma:=\{||\cdot||_d: d \in D\}$ of Kantorovich ultra-seminorms defines the desired uniform NA free locally convex $\mathbb{F}$-space which we denote by $L_\mathbb{F}(X,{\mathcal U})$.  The corresponding arrow $i: (X,{\mathcal U})\to L_\mathbb{F}(X,{\mathcal U})$ is a uniform embedding.
 \end{thm}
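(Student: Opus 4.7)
The plan is to realize $L_\mathbb{F}(X,\mathcal{U})$ as the bare free $\mathbb{F}$-vector space $L_\mathbb{F}(X)$ equipped with the locally convex topology generated by the family $\Gamma = \{||\cdot||_d : d \in D\}$, with $i: X \to L_\mathbb{F}(X,\mathcal{U})$, $i(x)=\{x\}$. Then verify three things in turn: Hausdorffness, $i$ is a uniform embedding, and the universal property.

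For Hausdorffness, take $u \neq \mathbf{0}$ with normal form $u=\sum_{i=1}^{n}\lambda_i x_i$ and let $m=|\operatorname{supp}(u)|$. Since $(X,\mathcal{U})$ is Hausdorff and NA, its uniformity is generated by $\mathcal{U}$-uniformly continuous ultra-pseudometrics, so one can find such a $d'$ that strictly separates the finitely many elements of $\operatorname{supp}(u) \cap X$. Extending $d'$ to $\overline{X}$ by Lemma \ref{l:extend} produces $d \in D$ with $d(x_i,x_j)>0$ for every pair of distinct points in $\operatorname{supp}(u)$, so $l_0>0$ in the notation of Claim 4 of Theorem \ref{t:AE}; that claim then yields $||u||_d \geq r\cdot l_0>0$. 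For the embedding property, Claim 5 of Theorem \ref{t:AE} gives $||i(x)-i(y)||_d=d(x,y)$ for every $d\in D$, so $i$ is uniformly continuous; conversely, every $\mathcal{U}$-uniformly continuous ultra-pseudometric on $X$ extends to a member of $D$ via Lemma \ref{l:extend}, and since $\mathcal{U}$ is generated by such ultra-pseudometrics, the initial uniformity on $X$ pulled back from $\Gamma$ coincides with $\mathcal{U}$.

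For the universal property, let $\varphi:(X,\mathcal{U}) \to V$ be a uniformly continuous map into an NA locally convex space $V$. Freeness of $L_\mathbb{F}(X)$ as a vector space produces a unique linear extension $\Phi: L_\mathbb{F}(X) \to V$ with $\Phi(\mathbf{0})=0_V$. To show $\Phi$ is continuous, fix any continuous ultra-seminorm $p$ on $V$ and define
$$d_p(x,y):=p(\Phi(x)-\Phi(y)) \qquad (x,y \in \overline{X}).$$
Then $d_p$ inherits the strong triangle inequality from $p$, so it is an ultra-pseudometric on $\overline{X}$. Its restriction to $X \times X$ equals $p(\varphi(x)-\varphi(y))$ and is $\mathcal{U}$-uniformly continuous by uniform continuity of $\varphi$ together with the fact that $p$ is itself $1$-Lipschitz for the pseudometric it induces on $V$. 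Since $\mathbf{0}$ is $\overline{\mathcal{U}}$-isolated, intersecting any $\mathcal{U}$-entourage witnessing this with the entourage that isolates $\mathbf{0}$ shows $d_p$ is in fact $\overline{\mathcal{U}}$-uniformly continuous, so $d_p \in D$. Now the ultra-seminorm $\sigma:=p\circ\Phi$ on $L_\mathbb{F}(X)$ satisfies $\sigma(x-y)=d_p(x,y)$ for all $x,y\in\overline{X}$, so by the maximality property (Claim 7 of Theorem \ref{t:AE}) we get $\sigma \leq ||\cdot||_{d_p}$. This yields continuity of $\Phi$ relative to the topology of $L_\mathbb{F}(X,\mathcal{U})$ and completes the verification of the universal property; uniqueness of the free object up to isomorphism then follows by the standard categorical argument.

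The main obstacle is the continuity step in the universal property: manufacturing from a given $(\varphi,p)$ a single ultra-pseudometric $d_p$ on $\overline{X}$ which both lies in $D$ and dominates $p\circ\Phi$ on differences of points from $\overline{X}$. Once this is set up, the heavy lifting is done by the maximality property of the Kantorovich ultra-seminorm already proved in Theorem \ref{t:AE}; the remaining statements (Hausdorffness, embedding, uniqueness) are short consequences of Claims 4, 5, and the freeness of $L_\mathbb{F}(X)$ as a vector space.
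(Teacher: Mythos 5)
Your proposal is correct and follows essentially the same route as the paper: the topology is generated by the family $\Gamma$ of Kantorovich ultra-seminorms, Hausdorffness and the embedding property come from Claims 4--6 of Theorem \ref{t:AE} together with Lemma \ref{l:extend}, and continuity of $\Phi$ is obtained by pulling back each continuous ultra-seminorm $p$ on $V$ to an ultra-pseudometric $d_p \in D$ and invoking the maximality property (Claim 7) to get $p\circ\Phi \leq ||\cdot||_{d_p}$. This is exactly the paper's argument, with your treatment of the isolated point $\mathbf{0}$ and the separation of support elements spelled out slightly more explicitly.
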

 \begin{proof}
 	First of all, observe that $L_\mathbb{F}(X,{\mathcal U})$ is Hausdorff. Indeed, this follows by analyzing Claims 4 and 6 of Theorem \ref{t:AE} (or, Proposition \ref{p:upper}).
 	
 	Next we have the following commutative diagram
 	\begin{equation*} \label{e:K1}
 	\xymatrix { (X,{\mathcal U}) \ar[dr]_{\varphi} \ar[r]^{i} & L_\mathbb{F}(X,{\mathcal U})
 		\ar[d]^{\Phi} \\
 		& V }
 	\end{equation*}
 	Now we only have  to show that $\Phi$ is continuous.
 	Since $\mathbf{0}$ is isolated in $(\overline{X},\overline{\mathcal U } )$ and $\varphi: (X,{\mathcal U})\to V$ is uniformly continuous, so is the natural extension  $\varphi:(\overline{X},\overline{\mathcal U })\to V.$    By our assumption $V$ has a family $\Gamma_V$ of ultra-seminorms which generate its topology. Every $\rho \in \Gamma_V$ induces
 	an ultra-seminorm $\sigma_{\rho}$ on $L_\mathbb{F}(X)$
 	and
 	an ultra-pseudometric $d_{\rho}$ on $\overline{X}$ defined by
 	$$
 	\sigma_{\rho}(u):=\rho(\Phi(u)), \ \ \
 	d_{\rho}(x,y):=\rho(\varphi(x)-\varphi(y)),
 	$$
 	respectively.
 
 	Since $\varphi:(\overline{X},\overline{\mathcal U })\to V$ is uniformly continuous we have $d_{\rho}\in D.$
 	Consider the corresponding Kantorovich ultra-seminorm $||\cdot||_{d_{\rho}}$ on $L_\mathbb{F}(X)$.
 	Then $\sigma_{\rho}(x-y)={d_{\rho}}(x,y)$ for every $x,y \in \overline {X}$. By the maximality property (Definition \ref{d:KantUltraNorm} and Theorem \ref{t:AE}) we obtain  $||\cdot||_{d_{\rho}} \geq \sigma_{\rho}.$ This guarantees that
 	$\rho(\Phi(u)) \leq ||u||_{d_{\rho}}$ for every $u \in L_\mathbb{F}(X)$, which implies
 	the continuity of $\Phi$.
 	
 	Finally, note that by Lemma \ref{l:extend} and Theorem \ref{t:AE} the family $\Gamma$ of Kantorovich  ultra-seminorms generates the original uniform structure ${\mathcal U}$ on $X=i(X) \subseteq L_\mathbb{F}(X)$. Hence $i$ is a uniform embedding.
 \end{proof}

 \begin{prop} \label{p:emb}
 	Let $\mathbb F$ be an NA valued filed and $K$ 
 	a subfield of $\mathbb F.$ Then for every
 	Hausdorff NA uniform space $(X,{\mathcal U})$
 	the natural algebraic inclusion $j: L_K(X) \to L_{\mathbb F}(X)$ of $K$-vector spaces is a topological embedding.
 \end{prop}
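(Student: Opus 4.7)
The plan is to exploit the description of both topologies via Kantorovich ultra-seminorms from Theorem \ref{t:freeLCS}, and then to show that the restriction of the $\mathbb F$-Kantorovich ultra-seminorm to $L_K(X)$ coincides with the $K$-Kantorovich ultra-seminorm, using the $G$-value property (Lemma \ref{l:GenDigital}).

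First I would observe that since $\mathbb F$ and $K$ have the same notion of ``NA valued'' (the valuation on $K$ is the restriction of the one on $\mathbb F$), the collection $D$ of $\overline{\mathcal U}$-uniformly continuous ultra-pseudometrics on $\overline{X}$ is the same for both free constructions. By Theorem \ref{t:freeLCS}, the topology of $L_K(X,{\mathcal U})$ is generated by the family $\{\|\cdot\|_d^{K}: d\in D\}$ and the topology of $L_{\mathbb F}(X,{\mathcal U})$ is generated by $\{\|\cdot\|_d^{\mathbb F}: d\in D\}$, where the superscript indicates the coefficient field over which the defining infimum is taken. The map $j$ is clearly injective, so it remains to prove that for each $u\in L_K(X)$ and each $d\in D$ one has the equality
\[
\|u\|_d^{\mathbb F}=\|u\|_d^{K}.
\]

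The inequality $\|u\|_d^{\mathbb F}\le \|u\|_d^{K}$ is immediate: every decomposition $u=\sum_k \rho_k(s_k-t_k)$ with $\rho_k\in K$ is also a decomposition with $\rho_k\in \mathbb F$. For the reverse inequality I would invoke Lemma \ref{l:GenDigital} with the additive subgroup $G:=K\subseteq \mathbb F$. Writing $u=\sum_{i=1}^n \lambda_i x_i$ in normal form, all coefficients $\lambda_i$ lie in $K$, so Lemma \ref{l:GenDigital} yields
\[
\|u\|_d^{\mathbb F}=\inf\bigl\{\max_{1\le k\le l}|\rho_k|\,d(s_k,t_k):u=\sum_{k=1}^{l}\rho_k(s_k-t_k),\ s_k,t_k\in\overline{X},\ \rho_k\in K\bigr\}.
\]
But the right hand side is precisely the definition of $\|u\|_d^{K}$ on $L_K(X)$ supplied by Theorem \ref{t:AE}. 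Hence $\|u\|_d^{\mathbb F}=\|u\|_d^{K}$ for every $u\in L_K(X)$ and every $d\in D$.

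From this identity it follows immediately that $j$ is continuous and, conversely, that every basic neighborhood of $0$ in $L_K(X,{\mathcal U})$ defined by $\|\cdot\|_d^{K}<\varepsilon$ is the preimage under $j$ of the corresponding neighborhood $\{v\in L_{\mathbb F}(X): \|v\|_d^{\mathbb F}<\varepsilon\}$. Therefore $j$ is a topological embedding. The only genuinely non-trivial ingredient is the identification $\|u\|_d^{\mathbb F}=\|u\|_d^{K}$; this is where the NA hypothesis and the $G$-value property do essential work, since in the archimedean setting one cannot in general confine the coefficients of an optimal decomposition to the subgroup generated by the $\lambda_i$'s (compare Remark \ref{r:3}.3).
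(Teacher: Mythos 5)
Your proposal is correct and follows essentially the same route as the paper: both reduce the statement to the equality $\|u\|_d^{K}=\|u\|_d^{\mathbb F}$ for $u\in L_K(X)$ via the $G$-value property and then conclude with Theorem \ref{t:freeLCS}. The only cosmetic difference is that you apply Lemma \ref{l:GenDigital} directly with $G=K$, whereas the paper cites Lemma \ref{cor:fromg} with the subgroup $G_u\subseteq K$; these are interchangeable here.
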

 \begin{proof} Let $d$ be a uniformly continuous ultra-pseudometric on $\overline{X}:=X\cup \{\mathbf{0}\}$.
 	Denote by $||\cdot||^K$ and $||\cdot||^{\mathbb F}$  the corresponding Kantorovich ultra-seminorms
 	of $d$ in $L_K(X)$ and $L_{\mathbb F}(X)$ respectively. Let
 	$u= \sum\limits_{i=1}^{n}\lambda_i x_i\in L_K(X) \subseteq L_{\mathbb F}(X)$.
 	Then clearly $G_u$ is an additive subgroup of $K$ and of $\mathbb F.$ Therefore by Theorem \ref{cor:fromg} we have $||u||^K=||u||^{\mathbb F}$.
 	Now Theorem \ref{t:freeLCS} guarantees that $j: L_K(X) \to L_{\mathbb F}(X)$ is a topological embedding.
 	\end{proof}

As in the classical case of the fields $\Bbb R$ or $\Bbb C$ (see \cite{Raikov}) we have the following property for the NA case. 

 \begin{prop}
 	\label{closed} The universal arrow $i: (X,{\mathcal U})\to L_\mathbb{F}(X,{\mathcal U})$ is a
 	closed embedding for any NA valued field $\mathbb{F}$.
 \end{prop}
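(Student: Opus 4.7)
The plan is to leverage Theorem \ref{t:freeLCS}: since $i$ is already a uniform embedding, the task reduces to showing $i(X)$ is closed in $L_\mathbb{F}(X,\mathcal U)$. For each $u\in L_\mathbb{F}(X,\mathcal U)\setminus i(X)$ I will single out one Kantorovich ultra-seminorm $||\cdot||_d$ from the defining family of Theorem \ref{t:freeLCS} that bounds $u$ uniformly away from $i(X)$. Write $u=\sum_{i=1}^n\lambda_i x_i$ in normal form (with $n=0$ corresponding to $u=\mathbf{0}$); using the hypothesis that $(X,\mathcal U)$ is Hausdorff and NA, I will choose an equivalence relation $E\in\mathcal U$ whose classes separate $x_1,\dots,x_n$ and take the uniformly continuous $\{0,1\}$-valued ultra-pseudometric $d$ on $X$ with $d(a,b)=0\Leftrightarrow aEb$, extended to $\overline{X}$ by $d(x,\mathbf{0}):=1$ for every $x\in X$. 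Then $d$ remains an ultra-pseudometric, $\mathbf{0}$ is isolated in $(\overline{X},d)$, and Claim~5 of Theorem \ref{t:AE} gives $||y||_d=d(y,\mathbf{0})=1$ for every $y\in X$.

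The key step is a uniform positive lower bound for $||u-y||_d$ as $y$ ranges over $X$, and I expect to split into two subcases. If $y$ is $E$-inequivalent to every $x_i$, then all pairs of distinct points in $\operatorname{supp}(u-y)$ have $d$-distance $1$, and Corollary \ref{c:special} yields $||u-y||_d=\max\{|\lambda_1|,\dots,|\lambda_n|,1\}\geq 1$. If instead $yEx_{i_0}$ for a unique $i_0$, I will rewrite
\[
u-y=\lambda_{i_0}(x_{i_0}-y)+v_{i_0}(y),\qquad v_{i_0}(y):=\sum_{i\neq i_0}\lambda_i x_i+(\lambda_{i_0}-1)y,
\]
observe $||\lambda_{i_0}(x_{i_0}-y)||_d=|\lambda_{i_0}|d(x_{i_0},y)=0$ by Claim~5, and apply the ultrametric inequality twice to deduce $||u-y||_d=||v_{i_0}(y)||_d$. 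Since all pairwise $d$-distances in $\operatorname{supp}(v_{i_0}(y))$ are again $1$, a second application of Corollary \ref{c:special} yields $||v_{i_0}(y)||_d=r_{i_0}$, where $r_{i_0}:=\max\bigl(\{|\lambda_i|:i\neq i_0\}\cup\{|\lambda_{i_0}-1|\}\bigr)$; the hypothesis $u\notin i(X)$ precisely rules out $r_{i_0}=0$. Taking $\delta:=\min\{1,r_1,\dots,r_n\}>0$, the open $d$-ball of radius $\delta$ around $u$ is a neighbourhood of $u$ in $L_\mathbb{F}(X,\mathcal U)$ disjoint from $i(X)$, completing the argument.

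The hard part will be the second subcase: when $y$ collapses with $x_{i_0}$ modulo $E$, the pseudometric $d$ degenerates on the pair $(x_{i_0},y)$, so no lower bound on $||u-y||_d$ can be read directly from $\operatorname{supp}(u-y)$ via the Kantorovich formula. The rewriting above is designed to absorb the norm-zero piece $\lambda_{i_0}(x_{i_0}-y)$ and reduce the estimate to a clean application of Corollary \ref{c:special} on the remaining vector $v_{i_0}(y)$, whose coefficients remain controlled by those of $u$.
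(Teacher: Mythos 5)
Your proposal is correct, but it takes a genuinely different route from the paper. The paper's proof fixes $v\notin X$ and produces a continuous linear map $\Phi$ into a concrete finite-dimensional space: the augmentation functional $\sum\lambda_k x_k\mapsto\sum\lambda_k$ into $\mathbb{F}$ when $v=\lambda x$ or $v=\mathbf 0$, and otherwise a map into $\mathbb{F}^2$ obtained (via the universal property) from a locally constant map on a three-piece clopen partition separating $x_1,x_2$ from the rest; in each case $\Phi(X)$ is a finite, hence closed, set that misses $\Phi(v)$, so no norm computation is needed. You instead stay inside $L_\mathbb{F}(X,\mathcal U)$ and exhibit a single Kantorovich ultra-seminorm $\|\cdot\|_d$ from the family of Theorem \ref{t:freeLCS} --- built from the $\{0,1\}$-valued ultra-pseudometric of a separating equivalence relation $E\in\mathcal U$ --- together with an explicit $\delta>0$ bounding $\|u-y\|_d$ from below uniformly in $y\in X$. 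Both arguments rest on the same combinatorial input (an entourage-equivalence relation separating the finitely many support points, available because $(X,\mathcal U)$ is NA and Hausdorff), and your case analysis is sound: the rewriting $u-y=\lambda_{i_0}(x_{i_0}-y)+v_{i_0}(y)$ correctly absorbs the norm-zero term, the ultrametric inequality gives $\|u-y\|_d=\|v_{i_0}(y)\|_d$, Corollary \ref{c:special} applies since all pairwise distances in $\operatorname{supp}(v_{i_0}(y))$ equal $1$ (including distances to $\mathbf 0$ under your extension), and $r_{i_0}=0$ would force $n=1$, $\lambda_1=1$, i.e.\ $u\in i(X)$. What the paper's approach buys is brevity and no case distinction on where $y$ sits; what yours buys is a quantitative statement --- a single seminorm in which $u$ is at distance at least $\delta$ from all of $i(X)$ --- without leaving the free space.
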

 \begin{proof} We have to show that $X=i(X)$ is closed in $L_\mathbb{F}(X)$. Let $v \in L_\mathbb{F}(X)$ be a vector such that $v \notin X$. It is enough to find a locally convex space $V$ and a continuous linear morphism $\Phi: L_\mathbb{F}(X) \to V$ such that $\Phi(v) \notin cl(\Phi(X))$.
  For $v=\lambda x$ with $\lambda \neq 1$ and $x \in X$ consider the continuous functional $$\Phi: L_\mathbb{F}(X) \to \mathbb{F},  \ \sum_{k=1}^m \lambda_k x_k \mapsto \sum_{k=1}^m \lambda_k.$$
 Then $\Phi(v) = \lambda \notin cl(\Phi(X))=\{1\}$. The same $\Phi$ works for the case of $v={\mathbf 0}$.
 	
 	Now we may suppose that $v=\sum_{i=1}^n \lambda_i x_i$ with non-zero coefficients $\lambda_i$ and that $\operatorname{supp}(v)$ contains at least two elements from $X$. That is, $\operatorname{supp}(u)=\{x_1,x_2,x_3, \ldots, x_n\}$, where $x_1, x_2 \in X$  and  $n \geq 2$.
 	Define $V$ as the 2-dimensional NA normed $\mathbb{F}$-space $\mathbb{F}^2$
 	(with the $\max$ ultra-norm).
 	Since the uniform space  $(X,{\mathcal U})$ is NA and Hausdorff, one may partition it into three clopen disjoint subsets
 	$$
 	X = X_1 \cup X_2 \cup X_3
 	$$
 	such that $$x_1 \in X_1, x_2 \in X_2, x_k \in X_3 \ \ \forall \ 3\leq  k \leq n.$$
 	Now define
 	$$
 	\varphi: X \to V=\mathbb{F}^2, \ \ \ \
 	\varphi(x) =
 	\begin{cases}
 	(1,0) & {\text{for}} \ x \in X_1\\
 	(0,1) & {\text{for}} \ x \in X_2\\
 	(0,0) & {\text{for}} \ x \in X_3.
 	\end{cases}
 	$$
 	This map is uniformly continuous and $\mathbb{F}^2$ is a locally convex NA $\mathbb{F}$-space. Hence, by the universality property, there exists the continuous extension $\Phi: L_\mathbb{F}(X) \to V$. Now observe that $$\Phi(v)=(\lambda_1,\lambda_2) \notin cl(\Phi(X))=\{(1,0), (0,1), (0,0)\}.$$	
 \end{proof}

 \subsection{Normability and metrizability}

 \begin{thm} \label{t:normable}
 	Let $\mathbb F$ be an NA valued field with a trivial valuation, $(X,d)$ be an ultra-metric space and ${\mathcal U}(d)$ 
 	be the uniformity of $d$. Then the free NA locally convex space $L_\mathbb{F}(X,{\mathcal U}(d))$ is normable by the Kantorovich ultra-norm $||\cdot||_d$.
 \end{thm}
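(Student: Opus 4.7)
The plan is to invoke Theorem \ref{t:freeLCS}, which identifies the topology of $L_\mathbb{F}(X,\mathcal{U}(d))$ with the one generated by the family $\Gamma = \{||\cdot||_\rho : \rho \in D\}$ of Kantorovich ultra-seminorms, where $D$ consists of all $\overline{\mathcal U}$-uniformly continuous ultra-pseudometrics on $\overline{X}$. After extending $d$ to $\overline{X}$ with $\mathbf{0}$ isolated (via Lemma \ref{l:extend}), one has $d \in D$, so $||\cdot||_d$ is already one member of the defining family; equivalently, the $||\cdot||_d$-topology is automatically coarser than the free locally convex topology. Moreover, Theorem \ref{t:AE} guarantees that $||\cdot||_d$ is an ultra-norm (not merely an ultra-seminorm) because $d$ is an ultra-metric. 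Thus the heart of the matter is to check the reverse direction: every $||\cdot||_\rho$ with $\rho \in D$ is continuous with respect to $||\cdot||_d$. By linearity and translation invariance, this reduces to continuity at the origin, i.e., one has to produce, for each $\epsilon>0$, a $\delta>0$ such that $||u||_d<\delta$ forces $||u||_\rho<\epsilon$ for every $u\in L_\mathbb{F}(X)$.

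The key leverage comes from the triviality of the valuation on $\mathbb{F}$, which squeezes $|c|$ into $\{0,1\}$ for every $c\in\mathbb{F}$. Fix $\rho\in D$ and $\epsilon>0$. Uniform continuity of $\rho$ with respect to $\overline{\mathcal U}$ yields $\delta_0>0$ with $d(x,y)<\delta_0 \Rightarrow \rho(x,y)<\epsilon$ for $x,y \in X$ (the case $x=y=\mathbf{0}$ being trivial). Put $\delta=\min(\delta_0,1)$. If $||u||_d<\delta$, then by the Min-attaining Theorem \ref{t:attaining} (Corollary \ref{c:min}) there is a decomposition
\[
u=\sum_{i,j=1}^{m} c_{ij}(x_i-x_j),\quad x_i\in\operatorname{supp}(u)\subseteq\overline{X},\ c_{ij}\in G_u,
\]
realizing $\max_{i,j}|c_{ij}|d(x_i,x_j)<\delta$. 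With $|c_{ij}|\in\{0,1\}$ this translates to: whenever $c_{ij}\neq 0_\mathbb{F}$, we have $d(x_i,x_j)<\delta$.

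Since $\delta \leq 1$ and the extension of $d$ to $\overline{X}$ satisfies $d(\mathbf{0},x)\geq 1$ for every $x\in X$, none of the pairs $(x_i,x_j)$ with $c_{ij}\neq 0_\mathbb{F}$ can involve $\mathbf{0}$ (in a nontrivial way). Hence each such pair lives in $X\times X$, and uniform continuity delivers $\rho(x_i,x_j)<\epsilon$ for every such pair. Using this very decomposition as an upper bound via the definition of $||\cdot||_\rho$, one obtains
\[
||u||_\rho \leq \max_{i,j}|c_{ij}|\rho(x_i,x_j) = \max_{c_{ij}\neq 0_\mathbb{F}}\rho(x_i,x_j) < \epsilon,
\]
as required.

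The main obstacle I anticipate is the handling of the case $u\notin L^0_\mathbb{F}(X)$, where by Notation \ref{notation} we have $\mathbf{0}\in\operatorname{supp}(u)$ and every admissible decomposition \emph{must} involve pairs touching $\mathbf{0}$. The trick is exactly the synergy between the trivial valuation and the choice $d(\mathbf{0},x)\geq 1$: together they force $||u||_d\geq 1$ for such $u$, so the threshold $\delta\leq 1$ silently confines the entire analysis to $u\in L^0_\mathbb{F}(X)$, where the decomposition stays inside $X$ and the uniform-continuity estimate on $\rho|_{X\times X}$ is all one needs. Everything else in the argument is bookkeeping built directly on top of Theorem \ref{t:AE} and Theorem \ref{t:freeLCS}.
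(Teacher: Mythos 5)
Your argument is correct and rests on the same decisive estimate as the paper's: with a trivial valuation, $\max_i|\lambda_i|d(x_i,y_i)<\delta$ forces $d(x_i,y_i)<\delta$ for every term with nonzero coefficient, and uniform continuity then transfers this control to the target ultra-(semi)norm, while the choice $d(\mathbf{0},x)\geq 1$ from Lemma \ref{l:extend} keeps the relevant pairs inside $X\times X$. The only difference is packaging: you compare $||\cdot||_d$ against the generating family $\Gamma$ of Theorem \ref{t:freeLCS}, invoking the Min-attaining Theorem and the support decomposition (more machinery than needed, since any decomposition with max-cost below $\delta$ already suffices), whereas the paper verifies the universal property directly for the normed space $(L_\mathbb{F}(X),||\cdot||_d)$; both reductions are valid.
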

 \begin{proof}
 	As in Lemma \ref{l:extend} consider the extension of $d$ on $\overline{X}$. Next, by
 	Theorem \ref{t:AE}, we have the corresponding Kantorovich ultra-norm $||\cdot||.$  It suffices to show that if $\varphi: (X,d) \to V$ is a uniformly continuous map to a locally convex space $V,$ then the linear extension $ \Phi: (L_{\mathbb F}(X),||\cdot||)\to V$ is continuous.
 	Being a locally convex space the topology of $V$ is defined by a collection of ultra-seminorms $\{\rho_i\}_{i\in I}.$ Clearly, $\varphi: (\overline{X},d)\to V$ is uniformly continuous. Fix $\varepsilon>0$ and $i_0\in I.$ It follows that there exists
 	$\delta>0$ such that $\rho_{i_0} (\varphi(x)-\varphi(y))<\varepsilon \ \forall x,y\in \overline{X} $ with $d(x,y)<\delta.$ Now assume that $u\in L_{\mathbb F}(X)$ with $||u||<\delta.$ We prove the continuity of $\Phi$ by showing that $\rho_{i_0}(\Phi(u))<\varepsilon.$  By the definition of the ultra-norm $||\cdot||$ there exists a decomposition $u=\sum\limits_{i=1}^{n}\lambda_i(x_i-y_i)$ such that $\max\limits_{1\leq i\leq n}|\lambda_i|d(x_i,y_i)<\delta.$ Since the valuation $|\cdot|$ is trivial we obtain  $\max\limits_{1\leq i\leq n}d(x_i,y_i)<\delta.$
 	It follows that
 	\begin{align*}
 &\rho_{i_0}(\Phi(u))=\rho_i(\Phi(\sum\limits_{i=1}^{n}\lambda_i(x_i-y_i)))=\rho_i(\sum\limits_{i=1}^{n}\lambda_i(\varphi(x_i)-\varphi(y_i)))\leq \\
 &\leq\max\limits_{1\leq i\leq n}|\lambda_i|\rho_{i_0}(\varphi(x_i)-\varphi(y_i))=\max\limits_{1\leq i\leq n}\rho_{i_0}(\varphi(x_i)-\varphi(y_i))<\varepsilon.
 	\end{align*}

 \end{proof}
It is known  that if a Tychonoff space $X$ is non-discrete, then $A(X)$ is not metrizable (see \cite[Theorem 7.1.20]{AT}).
This result inspired us to obtain the following.
 \begin{prop}
 	Let $(X,\mathcal U)$ be a non-discrete NA uniform space. Let
 	$\mathbb F$ be a complete NA valued field with a non-trivial valuation. Then $L_\mathbb{F}(X,{\mathcal U})$ is not metrizable.
 \end{prop}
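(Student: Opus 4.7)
The strategy is proof by contradiction. Suppose $L_\mathbb{F}(X,\mathcal U)$ were metrizable. Since its topology is defined by the family $\Gamma=\{\|\cdot\|_d:d\in D\}$ of Kantorovich ultra-seminorms (Theorem \ref{t:freeLCS}), a countable base at $0$ yields a countable subfamily $\{d_n\}_{n\in\mathbb N}\subseteq D$ for which the finite intersections $\{v:\|v\|_{d_{n_1}}<\varepsilon,\ldots,\|v\|_{d_{n_k}}<\varepsilon\}$ form a neighborhood base at $0$; in particular, $u_n\to 0$ in $L_\mathbb{F}(X,\mathcal U)$ if and only if $\|u_n\|_{d_i}\to 0$ for every $i$. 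Because $i:X\hookrightarrow L_\mathbb{F}(X,\mathcal U)$ is a uniform embedding and $\|x-y\|_{d_n}=d_n(x,y)$ by Claim $5$ of Theorem \ref{t:AE}, the uniformity $\mathcal U$ is generated by the countable family $\{d_n|_X\}$, hence is metrizable; because $\mathcal U$ is NA and Hausdorff it is metrizable by an ultra-metric $d_X$ on $X$.

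Using non-discreteness, pick a non-isolated point $p\in X$ and a sequence $(z_n)$ in $X\setminus\{p\}$ converging to $p$ in $\mathcal U$. By non-triviality of the valuation, fix $\alpha\in\mathbb F$ with $|\alpha|>1$. Passing to a subsequence, we may assume that $\beta_n:=d_X(z_n,p)$ is strictly decreasing to $0$ and that $d_i(z_n,p)<|\alpha|^{-2n}$ whenever $i\le n$. Consider
$$u_n:=\alpha^n(z_n-p)\in L_\mathbb{F}(X).$$
For each fixed $i$, Claim $5$ of Theorem \ref{t:AE} gives $\|u_n\|_{d_i}=|\alpha|^{n}d_i(z_n,p)\le|\alpha|^{-n}$ for $n\ge i$; hence $u_n\to 0$ in $L_\mathbb{F}(X,\mathcal U)$.

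The contradiction will come from producing a single $d\in D$ with $\|u_n\|_d=1$ for every $n$. Define $f:X\to\mathbb F$ by $f(p):=0$, $f(x):=\alpha^{-m}$ when $d_X(x,p)\in(\beta_{m+1},\beta_m]$, and $f(x):=1$ when $d_X(x,p)>\beta_1$. The clopen-partition structure of $d_X$ together with $|\alpha|^{-m}\to 0$ renders $f$ uniformly continuous: given $\varepsilon>0$, pick $N$ with $|\alpha|^{-N}<\varepsilon$ and set $\delta:=\beta_{N+1}$; a routine check using the strong triangle inequality shows that if $d_X(x,y)<\delta$ then $x,y$ lie either in the same annulus (where $f$ is constant) or in annuli of indices $>N$, so $|f(x)-f(y)|\le|\alpha|^{-N}<\varepsilon$. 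Because $|\cdot|$ is NA, $d(x,y):=|f(x)-f(y)|$ is a uniformly continuous ultra-pseudometric on $X$; extend it to $\overline X$ by Lemma \ref{l:extend} so that $d\in D$.

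By Claim $5$ of Theorem \ref{t:AE}, $\|z_n-p\|_d=d(z_n,p)=|f(z_n)-f(p)|=|\alpha|^{-n}$, so $\|u_n\|_d=|\alpha|^n\cdot|\alpha|^{-n}=1$ for every $n$. This contradicts $u_n\to 0$, which would force $\|u_n\|_d\to 0$. The main technical obstacle is constructing the uniformly continuous $f$, and it is at this step that both hypotheses are essential: the NA structure of $\mathcal U$ permits $f$ to be locally constant on nested annuli around $p$, while the non-trivial NA valuation on $\mathbb F$ both supplies an $\alpha$ with $|\alpha|>1$ and upgrades $|f(\cdot)-f(\cdot)|$ into an ultra-pseudometric.
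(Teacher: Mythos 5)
Your argument is correct before and after one step, but that step is a genuine gap: you ``pick a non-isolated point $p\in X$ and a sequence $(z_n)$ in $X\setminus\{p\}$ converging to $p$''. The hypothesis is that the \emph{uniformity} $\mathcal U$ is non-discrete (no entourage is contained in the diagonal), and this does not imply that the \emph{topology} of $X$ is non-discrete. For instance, take $X=\Bbb N$ with the ultra-metric $d(m,n)=\max\{2^{-m},2^{-n}\}$ for $m\neq n$: the ball of radius $2^{-m}$ around $m$ is $\{m\}$, so every point is isolated, yet for every $\delta>0$ there are distinct points at distance less than $\delta$. In such a space there is no non-isolated point and no convergent sequence of distinct points, so your radial construction of $f$ via the annuli $\{x:\ d_X(x,p)\in(\beta_{m+1},\beta_m]\}$ around a single center has nothing to attach to. This is precisely the case the paper's proof is built to handle: it extracts pairs $(x_n,y_n)\in\varepsilon_n$ with $x_n\neq y_n$ from shrinking entourages (no accumulation point is needed) and constructs the separating functional as an infinite sum $f=\sum_n f_n$, with each $f_n$ calibrated inductively to the pair $(x_n,y_n)$; the completeness of $\mathbb F$, which your proof never invokes, is exactly what makes that infinite sum converge.

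On the positive side, the rest checks out: the reduction to a countable generating family of Kantorovich ultra-seminorms and hence to an ultra-metric $d_X$ metrizing $\mathcal U$ is sound, the uniform continuity of $f$ via the strong triangle inequality is verified correctly, and $d:=|f(\cdot)-f(\cdot)|$ does yield an element of $D$ with $\|u_n\|_d=1$ while $u_n\to 0$. So your proof is valid under the stronger hypothesis that the topology of $X$ is non-discrete, and in that special case it is arguably simpler than the paper's (one explicit function, no induction, no completeness of $\mathbb F$). To cover the hypothesis as stated you would need to replace the single center $p$ by the sequence of pairs and essentially reproduce the paper's inductive summation argument.
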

 \begin{proof}
 	Assuming the contrary, there exists a decreasing sequence $\{U_n\}_{n\in \mathbb N}$ which forms a local base at  $\mathbf 0\in L_\mathbb{F}(X,{\mathcal U}).$ Since the valuation $|\cdot|$ is non-trivial, there exists $\lambda\in\mathbb F$ with $|\lambda|>1.$ In view of Theorem \ref{t:freeLCS} $(X,\mathcal U)$ is a uniform subspace of $L_\mathbb{F}(X,{\mathcal U}).$ By the continuity of the scalar multiplication it follows that there exists a sequence of entourages  $\varepsilon_n\in \mathcal U$ such that
 	$\lambda^n(x-y)\in U_n  \ \forall x,y\in \varepsilon_n.$ Since $\mathcal U$ is non-discrete and Hausdorff we can find a sequence $(x_n,y_n)\in \varepsilon_n$ such that $x_n\neq y_n \ \forall n\in\Bbb N$ and $\forall i<n \ \ x_n \notin \{x_i,y_i\}.$ Clearly, the sequence $u_n=\lambda^n(x_n-y_n) \in U_n$ converges to $\mathbf{0}.$ Let us show that this leads to a contradiction.
 	Since $(X,\mathcal U)$ is NA it is easy to define, by induction on $n,$ a sequence $\{f_n:n\in\Bbb N\}$ of uniformly continuous functions on $(X,\mathcal U)$ with values in $\mathbb F$ such that for every $n\geq 1$:
 	\begin{enumerate} \item $ \ |f_n(x)|\leq |\lambda|^{-n}  \   \forall x\in X;$
 	\item $f_n(x_k)=f_n(y_k)=f_n(y_n)=0_{\mathbb F} \ \ \forall k<n;$
 	\item $f_n(x_n)=\lambda^{-n}-\sum\limits_{k=1}^{n-1}(f_k(x_n)-f_k(y_n))$ if
 	$|\sum\limits_{k=1}^{n-1}(f_k(x_n)-f_k(y_n))|\leq |\lambda|^{-n}$ and
 	$f_n(x_n)=\lambda^{-n}$ otherwise.
 	\end{enumerate}
 	By $(3)$ and the strong triangle inequality we have $|f_n(x_n)+\sum\limits_{k=1}^{n-1}(f_k(x_n)-f_k(y_n))|\geq |\lambda|^{-n}.$  By $(1)$ for every $x\in X$ the  sequence of partial sums $\bigg\{\sum\limits_{k=1}^{n}f_k(x) \bigg \}_{n\in \Bbb N}$ is Cauchy.  Since the field $\mathbb F$ is complete,  the function $f=\sum\limits_{n=1}^{\infty}f_n$ is well defined. From $(1)$ it follows that $f$ is uniformly continuous, and thus it admits,   an extension to a linear continuous map $\widetilde{f}: L_\mathbb{F}(X,{\mathcal U})\to \mathbb F.$ For every $n\in \Bbb N$ we have
 	\begin{align*}
 |&\widetilde{f}(u_n)|=|\lambda|^{n}\cdot |\sum\limits_{k=1}^{\infty}(f_k(x_n)-f_k(y_n))|= \\ &=|\lambda|^{n}\cdot |f_n(x_n)+\sum\limits_{k=1}^{n-1}(f_k(x_n)-f_k(y_n))|\geq |\lambda|^{n}\cdot |\lambda|^{-n}=1.
 	\end{align*}
 	 It follows that the sequence $\{\widetilde{f}(u_n)\}$ does not converge to $\mathbf{0},$ contradicting the continuity of $\widetilde{f}.$
 \end{proof}
In contrast, note that the uniform free NA abelian
topological group  $A_{\scriptscriptstyle\mathcal{NA}}$ (Definition \ref{d:FreeGr}) is metrizable for every metrizable  NA uniform space $(X,{\mathcal U})$ (see \cite{MS} and also Remark  \ref{rem:spmet}).
 \subsection{Free abelian NA groups and  NA Tkachenko-Uspenskij theorem}

 Recall the following definition from  \cite{MS}. \begin{defi} \label{d:FreeGr} Let
 	$(X,{\mathcal U})$ be an NA uniform space. The \emph{uniform free NA abelian
 		topological group of $(X,{\mathcal U})$} is denoted by $A_{\scriptscriptstyle\mathcal{NA}}$ and  defined as follows:
 	$A_{\scriptscriptstyle\mathcal{NA}}$ is an  NA abelian topological group for
 	which
 	there exists a universal uniform map
 	$i: X \to A_{\scriptscriptstyle\mathcal{NA}}$
 	satisfying the following universal property. For every uniformly
 	continuous map $\varphi: (X,{\mathcal U}) \to G$ into an
 	abelian NA topological group $G$ there exists a unique
 	continuous homomorphism \textbf{$\Phi: A_{\scriptscriptstyle\mathcal{NA}} \to G $} for which
 	the following diagram commutes:
 	\begin{equation*} \label{equ:ufn}
 	\xymatrix { (X,{\mathcal U}) \ar[dr]_{\varphi} \ar[r]^{i} &
 		A_{\scriptscriptstyle\mathcal{NA}}
 		\ar[d]^{\Phi} \\
 		& G }
 	\end{equation*}
 \end{defi}

 Let $(X,{\mathcal U})$ be an NA uniform space and
 $Eq(\mathcal U)$ be the set of all equivalence relations from ${\mathcal
 	U}$.

 \begin{thm} \label{thm:desna} \cite[Theorem 4.14]{MS}
 	Let $(X,{\mathcal U})$ be NA and let
 	$\mathcal{B}\subseteq Eq(\mathcal U)$ be a  base of ${\mathcal U}$.
 	For every $\varepsilon\in \mathcal{B}$  denote by
 	$<\varepsilon>$ the subgroup of $A(X)$ algebraically generated by the set
 	$\{x-y\in A(X) : (x,y) \in \varepsilon\}.$
 	Then
 	$\{<\varepsilon>\}_{\varepsilon \in \mathcal{B} }$ is a local base at the zero element of 
 	$A_{\scriptscriptstyle\mathcal{NA}}(X,{\mathcal U}).$
 \end{thm}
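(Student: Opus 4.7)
The plan is to verify the two implicit inclusions: first, that each $\langle \varepsilon \rangle$ is an open subgroup of $A_{\scriptscriptstyle\mathcal{NA}}(X,{\mathcal U})$ (hence a neighborhood of the zero element), and second, that every neighborhood of the zero element contains $\langle \varepsilon \rangle$ for some $\varepsilon \in \mathcal{B}$. Both halves will exploit the universal property of $A_{\scriptscriptstyle\mathcal{NA}}$ together with the key observation that for a discrete abelian target, uniform continuity amounts to the kernel equivalence relation of the map lying in the uniformity.

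For the first half, fix $\varepsilon \in \mathcal{B}$ and endow the quotient set $X/\varepsilon$ with the discrete uniformity. The canonical surjection $q: (X,{\mathcal U}) \to X/\varepsilon$ is uniformly continuous precisely because $\varepsilon \in {\mathcal U}$. Composing $q$ with the inclusion $X/\varepsilon \hookrightarrow A(X/\varepsilon)$, where the free abelian group carries the discrete topology (which is trivially NA), yields a uniformly continuous map into an NA abelian topological group. By the universal property of Definition \ref{d:FreeGr}, this extends to a continuous homomorphism $\Phi: A_{\scriptscriptstyle\mathcal{NA}}(X,{\mathcal U}) \to A(X/\varepsilon)$. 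The standard algebraic identification $A(X)/\langle \varepsilon \rangle \cong A(X/\varepsilon)$ (which rests on the universal properties of the free abelian group and of quotients) then gives $\ker \Phi = \langle \varepsilon \rangle$, and since $A(X/\varepsilon)$ is discrete, $\ker \Phi$ is open. Hence $\langle \varepsilon \rangle$ is an open subgroup and therefore a neighborhood of the zero element.

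For the second half, let $U$ be any neighborhood of the zero element of $A_{\scriptscriptstyle\mathcal{NA}}(X,{\mathcal U})$. Since this group is NA, there is an open subgroup $V$ with $V \subseteq U$. The quotient $G := A_{\scriptscriptstyle\mathcal{NA}}(X,{\mathcal U})/V$ carries the discrete topology, being the quotient by an open subgroup. Hence the composition $\varphi := \pi \circ i: (X,{\mathcal U}) \to G$ is a uniformly continuous map into a discrete target, which means that the kernel equivalence relation
$$
\varepsilon_\varphi := (\varphi \times \varphi)^{-1}(\Delta_G) = \{(x,y)\in X\times X : \varphi(x)=\varphi(y)\}
$$
belongs to ${\mathcal U}$. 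Choose $\varepsilon \in \mathcal{B}$ with $\varepsilon \subseteq \varepsilon_\varphi$. Then whenever $(x,y) \in \varepsilon$, we have $\varphi(x) = \varphi(y)$, so $x - y \in V$; therefore $\langle \varepsilon \rangle \subseteq V \subseteq U$.

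The step I expect to be most delicate is the identification $\ker \Phi = \langle \varepsilon \rangle$ in the first half: the inclusion $\langle \varepsilon \rangle \subseteq \ker \Phi$ is immediate, but the reverse requires invoking the fact that $A(X/\varepsilon)$ is the quotient of $A(X)$ by precisely the subgroup generated by $\{x-y : (x,y)\in\varepsilon\}$, which follows from the freeness of both sides but deserves to be written out. A smaller technicality in the second half is justifying that uniform continuity into a discrete uniform space is equivalent to the kernel equivalence relation lying in the uniformity, which is a direct unpacking of the definition of uniform continuity applied to the diagonal entourage of $G$.
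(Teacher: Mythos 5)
Your proof is correct. The paper does not actually prove this statement---it is imported from \cite[Theorem 4.14]{MS}---but your argument is the standard one and matches the approach of the cited proof: each $\langle\varepsilon\rangle$ is open as the kernel of the continuous homomorphism onto the discrete group $A(X/\varepsilon)$ obtained from the universal property, and conversely an open subgroup $V\subseteq U$ (which exists since $A_{\scriptscriptstyle\mathcal{NA}}$ is NA by definition) pulls back under $\varphi=\pi\circ i$ to an entourage $(\varphi\times\varphi)^{-1}(\Delta_G)\in\mathcal U$ containing some $\varepsilon\in\mathcal B$, whence $\langle\varepsilon\rangle\subseteq V$. The two technical points you flag---that $\ker\Phi=\langle\varepsilon\rangle$ via $A(X)/\langle\varepsilon\rangle\cong A(X/\varepsilon)$, and that uniform continuity into a discrete group is exactly membership of the kernel relation in $\mathcal U$---are indeed the only delicate steps, and both go through as you indicate.
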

\begin{remark} \label{rem:spmet}
 It is easy to see from the  description above that if $(X,d)$ is an ultra-metric space, then $A_{\scriptscriptstyle\mathcal{NA}}$ is metrizable. The following theorem provides a specific metrization which can be viewed as a Graev type  ultra-norm.
\end{remark}
 \begin{lemma}\label{lem:met}
 	 Let $(X,d)$ be an ultra-metric space treated as an ultra-metric subspace of  $(\overline{X},d)$ as in Lemma \ref{l:extend}. Then $A_{\scriptscriptstyle\mathcal{NA}}$ is metrizable by the Graev type ultra-norm defined as follows. For $u\in A(X)$ let
 	$$||u||:=\inf\bigg \{\max \limits_{1\leq i\leq n}d(x_i,y_i): u=\sum\limits_{i=1}^{n}(x_i-y_i), \ x_i,y_i\in \overline{X} \bigg \}.$$
 \end{lemma}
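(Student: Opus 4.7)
My plan is to verify in succession the ultra-pseudo-norm axioms, the non-degeneracy $||u||=0\Rightarrow u=\mathbf{0}$, and the agreement of the norm topology with that of $A_{\scriptscriptstyle\mathcal{NA}}(X,\mathcal U(d))$.

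First, I would check that $||\cdot||$ is an ultra-pseudo-norm on $A(X)$, mimicking Claim 1 of Theorem \ref{t:AE}. Trivially $||\mathbf{0}||=0$ via the decomposition $\mathbf{0}=\mathbf{0}-\mathbf{0}$. Inverse-invariance $||{-u}||=||u||$ is immediate by swapping $x_i\leftrightarrow y_i$ inside each summand (using $d(x_i,y_i)=d(y_i,x_i)$). The strong triangle inequality is obtained by concatenating near-optimal Graev decompositions of $u$ and $v$: the maximum over the union is just the max of the two individual maxes.

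Next, I would show $||u||=0\Rightarrow u=\mathbf{0}$. The cleanest route is to embed $A(X)$ into $L_{\mathbb Q}(X)$, where $\mathbb Q$ is given the trivial (hence NA) valuation. Every Graev decomposition $u=\sum_{i=1}^{n}(x_i-y_i)$ is simultaneously a Kantorovich decomposition with scalars $s_i=1$, so $|s_i|\,d(x_i,y_i)=d(x_i,y_i)$ and therefore the corresponding Kantorovich ultra-seminorm satisfies $||u||_{L_{\mathbb Q}}\leq ||u||$. Since $d$ is an ultra-metric, Theorem \ref{t:AE}.1 tells us $||\cdot||_{L_{\mathbb Q}}$ is actually an ultra-norm, so $||u||=0$ forces $u=\mathbf{0}$. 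A self-contained alternative would adapt the reductions of Claim 2 of Theorem \ref{t:AE} to Graev form (a non-support element $z$ occurs with total coefficient zero, hence each summand $(x-z)$ pairs with a $(z-y)$ and the two merge into $(x-y)$ without increasing $\max d$) and then invoke the lower bound of Claim 4 with $l_0=\min\{d(a,b):a\neq b\in\operatorname{supp}(u)\}>0$.

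Finally, I would identify the topology of $||\cdot||$ with that of $A_{\scriptscriptstyle\mathcal{NA}}(X,\mathcal U(d))$ via Theorem \ref{thm:desna}. Set $\varepsilon_r:=\{(x,y)\in X^2:d(x,y)<r\}$; by the strong triangle inequality each $\varepsilon_r$ is an equivalence relation on $X$, and $\{\varepsilon_r\}_{r>0}$ is a base of $\mathcal U(d)$. Theorem \ref{thm:desna} then gives $\{\langle\varepsilon_r\rangle\}_{r>0}$ as a local base at zero of $A_{\scriptscriptstyle\mathcal{NA}}$. The extension supplied by Lemma \ref{l:extend} can be chosen so that $d(x,\mathbf{0})\geq 1$ for every $x\in X$; hence for $r\leq 1$ no summand $x_i-y_i$ in a Graev decomposition with $\max_i d(x_i,y_i)<r$ can involve $\mathbf{0}$. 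This yields
$$\langle\varepsilon_r\rangle=\{u\in A(X):||u||<r\}\qquad\text{for all }0<r\leq 1,$$
so the family of open $r$-balls of $||\cdot||$ and the family $\{\langle\varepsilon_r\rangle\}$ are cofinal local bases at $\mathbf 0$, and the two topologies coincide.

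The main obstacle is the non-degeneracy in Step 2; the embedding $A(X)\hookrightarrow L_{\mathbb Q}(X)$ with the trivial valuation sidesteps a fresh reductions argument by invoking Theorem \ref{t:AE}.
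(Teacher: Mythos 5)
Your proposal is correct and takes essentially the same approach as the paper: the paper's entire proof is the single observation that $B_d(\mathbf{0},\varepsilon)=\langle\varepsilon\rangle$ for $\varepsilon<1$, which is precisely your Step 3 (the identification of norm balls with the subgroups from Theorem \ref{thm:desna}, using that $\mathbf{0}$ is at distance $\geq 1$ from $X$). Your Steps 1 and 2 merely fill in routine verifications the paper leaves implicit, and your non-degeneracy argument via the comparison $||u||_{L_{\mathbb Q}}\leq||u||$ is consistent with the Claim inside the proof of Theorem \ref{t: Tk-Usp}, where the two quantities are in fact shown to be equal.
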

 \begin{proof}
 	Observe that for $\varepsilon<1$
 	we have $B_d(\mathbf{0},\varepsilon)=<\varepsilon>$,
 	where $B_d(\mathbf{0},\varepsilon)$ is the open $\varepsilon$-ball. 
 \end{proof}
 \begin{remark}
 	Suppose that $(X,\mathcal U)$ is an NA
 	uniform space generated by a collection of ultra-seminorms $\{d_i\}_{i\in I}.$ Then
 	using the idea of Lemma \ref{lem:met} one can show that the topology of $A_{\scriptscriptstyle\mathcal{NA}}$ is generated by the set of the corresponding Graev type ultra-norms
 	$\{||\cdot||_{d_i}\}_{i\in I}.$ So we have an analogy with Theorem \ref{t:freeLCS}.
 	At the same time we have one key difference. In the  description of $A_{\scriptscriptstyle\mathcal{NA}}$ it is enough to consider any  set of ultra-pseudometrics $\{d_i\}_{i\in I}$ which generate the uniformity $\mathcal U$ on $X.$
 \end{remark}

 By Tkachenko-Uspenskij theorem \cite{Tk,Us-free}, the free abelian topological group $A(X)$ is a  topological subgroup of $L(X)$ (here $\mathbb F=\Bbb R$).
 This can be derived (as in \cite{Us-free}) using the usual integer value property and descriptions of Graev's extension.
 Consider an NA valued field $\mathbb F$ of characteristic zero.
 It is clear that, algebraically,
 $A_{\scriptscriptstyle\mathcal{NA}}(X)$ is a natural subgroup of
 $L_{\mathbb F} (X)$ since $\Bbb Q$ is embedded in $\mathbb F$ as a subfield.
 So, it is  natural to ask for which NA valued fields $\mathbb F$ we have an analogue of Tkachenko-Uspenskij theorem. Theorem \ref{t: Tk-Usp} shows that
 this is true if and only if the valuation of $\mathbb F$ is trivial on $\Bbb Q$. First we give a particular example.

\begin{example} \label{r:Tk-Usp}
	Tkachenko-Uspenskij theorem is not true for the field $\mathbb F=\mathbb Q_{p}$ of $p$-adic numbers (with its standard valuation). Clearly, $\lim p^n =0_{\mathbb F}$ in ${\mathbb F}$.
	Now, let $x,y \in X$ be a pair of distinct points in an ultra-metric space $X$. By the continuity of the  operations $u_n:=p^n(x-y)$ converges to zero in the free locally convex space $L_{\mathbb F}(X)$. At the same time it is not true in the free NA abelian group $A_{\scriptscriptstyle\mathcal{NA}}(X),$ as it follows from the internal description of the topology of $A_{\scriptscriptstyle\mathcal{NA}}(X)$ (see Theorem \ref{thm:desna} or \cite{MS}).
\end{example}

 \begin{thm} \label{t: Tk-Usp}
 	Let $\mathbb F$ be an NA valued field and $(X,\mathcal U)$ be an
 	NA uniform space. Suppose also that $\operatorname{char}(\mathbb F)=0$ and consider 
  $A_{\scriptscriptstyle\mathcal{NA}} (X)$  as an algebraic subgroup of  $L_\mathbb F (X).$ The
 	following conditions are equivalent:
 	\begin{enumerate}
 	\item
 	 $A_{\scriptscriptstyle\mathcal{NA}}$ is a topological subgroup of $L_\mathbb{F}(X,{\mathcal U})$.
 	 \item
 	  The valuation of $\mathbb F$ is trivial on $\Bbb Q$.
 	  \end{enumerate}
 \end{thm}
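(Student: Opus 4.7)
The plan is to establish $(1) \iff (2)$ by comparing, for each uniformly continuous ultra-pseudometric $d$ on $\overline{X}$, the Kantorovich ultra-seminorm $||\cdot||_d$ on $L_\mathbb F(X,\mathcal U)$ with the Graev-type ultra-norm $||\cdot||^A_d$ on $A_{\scriptscriptstyle\mathcal{NA}}(X,\mathcal U)$ from Lemma \ref{lem:met}. By Theorem \ref{t:freeLCS} and the Remark following Lemma \ref{lem:met}, these two families generate the respective topologies, so condition $(1)$ reduces to the equality $||u||_d = ||u||^A_d$ for every $u \in A(X)$ and every such $d$.

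For $(2) \Rightarrow (1)$, fix $d$ and $u = \sum_{i=1}^n \lambda_i x_i \in A(X)$ with $\lambda_i \in \Bbb Z$. The NA $G_u$-value property (Lemma \ref{cor:fromg}) lets me compute $||u||_d$ as an infimum over matrices $(c_{ij})$ with entries in $G_u \subseteq \Bbb Z$. Since the valuation on $\Bbb Q$ is trivial, every non-zero integer has absolute value $1$, so $||u||_d$ becomes the infimum of $\max_{c_{ij}\neq 0} d(x_i,x_j)$ over such integer matrices subject to the usual balance conditions. Any such matrix yields a Graev-type decomposition by expanding each nonzero term $c_{ij}(x_i - x_j)$ into $|c_{ij}|$ copies of $\pm(x_i - x_j)$, giving $||u||^A_d \leq ||u||_d$. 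Conversely, any Graev decomposition $u = \sum (a_k - b_k)$ is a Kantorovich decomposition with unit coefficients, so $||u||_d \leq ||u||^A_d$. Equality follows, proving $(1)$.

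For $(1) \Rightarrow (2)$, I argue the contrapositive using Ostrowski's Theorem \ref{Ostrowski}: a non-trivial NA valuation on $\Bbb Q$ is equivalent to a $p$-adic one, so there exists a prime $p$ with $|p|<1$. Choose distinct $x,y \in X$ (the degenerate case presents no content). The sequence $u_n := p^n(x-y) \in A(X)$ satisfies $||u_n||_d \leq |p|^n d(x,y) \to 0$ for every $d$, so $u_n \to \mathbf 0$ in $L_\mathbb F(X,\mathcal U)$. On the other hand, by Hausdorffness I pick $\varepsilon \in Eq(\mathcal U)$ with $(x,y) \notin \varepsilon$. Extending the quotient map $X \to X/\varepsilon$ linearly to a homomorphism $\tilde\pi \colon A(X) \to A(X/\varepsilon)$ (whose kernel contains $\langle \varepsilon \rangle$), I observe $\tilde\pi(u_n) = p^n(\bar x - \bar y) \neq \mathbf 0$ because $A(X/\varepsilon)$ is torsion-free. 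Hence $u_n \notin \langle \varepsilon \rangle$ for any $n$, and by Theorem \ref{thm:desna} the sequence does not converge to $\mathbf 0$ in $A_{\scriptscriptstyle\mathcal{NA}}$, contradicting $(1)$.

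The main obstacle I anticipate is in the forward direction: one must verify carefully that the reductions underlying the $G_u$-value property preserve the coefficient subgroup at each step, and that the translation between integer matrices and Graev-style $\pm 1$ decompositions incurs no slack in the max-cost. The reverse direction is essentially a clean application of Ostrowski and Theorem \ref{thm:desna}.
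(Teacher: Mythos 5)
Your proposal is correct and follows essentially the same route as the paper: the direction $(1)\Rightarrow(2)$ is Ostrowski plus the sequence $p^{n}(x-y)$ (you supply the detail, via a quotient onto $A(X/\varepsilon)$, of why it fails to converge in $A_{\scriptscriptstyle\mathcal{NA}}$, which the paper leaves to Theorem \ref{thm:desna}), and $(2)\Rightarrow(1)$ is the identification of the Kantorovich ultra-seminorm with the Graev-type ultra-seminorm on $A(X)$ via the integer-value property under a trivial valuation on $\Bbb Q$. The only cosmetic difference is that you apply the $G_u$-value property directly in $L_{\mathbb F}(X)$ instead of first reducing to $L_{\Bbb Q}(X)$ through Proposition \ref{p:emb}, which changes nothing of substance.
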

 \begin{proof}
 	 (1) $\Rightarrow$ (2):
 	 If the valuation on $\Bbb Q$ is not trivial, then by Ostrowski's Theorem \ref{Ostrowski} this restricted valuation
 	 is equivalent to the 
 	  $p$-adic valuation. Now the proof is reduced to the concrete case of Example \ref{r:Tk-Usp}.
 	
 	 (2) $\Rightarrow$ (1):
 By Proposition \ref{p:emb} we know that  $L_\mathbb{Q}(X,{\mathcal U})$ is a topological subgroup of $L_\mathbb{F}(X,{\mathcal U})$. So it suffices to show that  $A_{\scriptscriptstyle\mathcal{NA}}$ is a topological subgroup of $L_\mathbb{Q}(X,{\mathcal U})$.	Let $\{d_i\}_{i\in I}$ be a family of ultra-pseudometrics generating the uniformity $\mathcal U.$ For every $i$ extend $d_i$ to $\overline X$
 as in Lemma \ref{l:extend}.
 Then consider the Kantorovich ultra-seminorm (Theorem \ref{t:AE}) $||\cdot||_{d_i}$ on $L_{\Bbb Q} (X).$ Since the restricted valuation $|\cdot|$ on $\Bbb Q$  is trivial, the topology of  $L_\mathbb{Q}(X,{\mathcal U})$ is generated by the family $\{||\cdot||_{d_i}\}_{i\in I}.$ It suffices to prove the following claim.
 	
 	\vskip 0.3cm
 	
 	\noindent	\textbf{Claim :} Let $(\overline{X},d)$ be an ultra-pseudometric
 	space,
 	 $||\cdot||^L$ be the corresponding Kantorovich ultra-seminorm on
 	$L_{\Bbb Q}(X)$ and $ ||\cdot||^A $ be the corresponding Graev type ultra-seminorm on $A_{\scriptscriptstyle\mathcal{NA}}$ (from Lemma \ref{lem:met}). Then
 	$||u||^L=||u||^A$ for every $u\in A(X)$.
 	\begin{proof}
 		Since $\Bbb Z$ is an additive subgroup of $\Bbb Q,$ it follows by Lemma \ref{l:GenDigital} that
 		\begin{align*}
 	&||u||^L=\inf\bigg \{\max \limits_{1\leq i\leq n}|\lambda_i|d(x_i,y_i): u=\sum\limits_{i=1}^{n}\lambda_i(x_i-y_i), \ x_i,y_i\in \overline{X}, \ \lambda_i\in \Bbb Z \bigg \} =\\ &=\inf\bigg \{\max \limits_{1\leq i\leq n}d(x_i,y_i): u=\sum\limits_{i=1}^{n}\lambda_i(x_i-y_i), \ x_i,y_i\in \overline{X}, \ \lambda_i\in \Bbb Z \bigg \}=\\ &=\inf\bigg \{\max \limits_{1\leq i\leq n}d(x_i,y_i): u=\sum\limits_{i=1}^{n}(x_i-y_i), \ x_i,y_i\in \overline{X} \bigg \}= ||u||^A.
 		\end{align*}
 	\end{proof}
\end{proof}
 \begin{example} \label{Levi-Civita}
 Theorem \ref{t: Tk-Usp} can be applied to the Levi-Civita field $\mathcal R$. Indeed, as it was noted in Example  \ref{Levi-Civita1}, $
  \mathcal R$ admits a natural dense valuation.
Its restriction on $\Bbb Q$ is trivial. We conclude, by
 Theorem \ref{t: Tk-Usp}, that  $A_{\scriptscriptstyle\mathcal{NA}}$ is a topological subgroup of
 $L_\mathcal{R}(X,{\mathcal U})$ for every NA uniform space $(X,{\mathcal U}).$
 \end{example}

 \section{Pointed version and the dual space}
 Using similar techniques to those mentioned in the previous sections, one can study the pointed version of NATP. However, its connection to the dual space is a unique feature which we present below.

 Let $(X,d,e) $ be a pointed ultra-pseudometric with a base point $e.$
 Let  $L_\mathbb F(X)$ be the free pointed 
 $\mathbb F$-vector space  on the pointed set $(X,e).$
 As before let $$L^{0}_\mathbb F(X):=\bigg \{ \sum\limits_{i=1}^{n}\lambda_i x_i\in L_\mathbb{F}(X)| \ \sum\limits_{i=1}^{n}\lambda_i=0_{\mathbb F} \bigg\}.$$

 \begin{defi}
 	The {\it Kantorovich ultra-seminorm} is the ultra-seminorm on $L^{0}_\mathbb F(X)$ given by the following formula. For $u\in L^{0}_\mathbb{F}(X)$ let
 	$$||u||:=\inf\bigg \{\max \limits_{1\leq i\leq n}|\lambda_i|d(x_i,y_i): u=\sum\limits_{i=1}^{n}\lambda_i(x_i-y_i), \ x_i,y_i\in X, \ \lambda_i\in \mathbb F \bigg \}.$$
 \end{defi}

It follows from the definition of the Kantorovich ultra-seminorm that $||x-y|| \leq d(x,y)$ for every $x,y \in X$.  As in the non-pointed case we can show
 that $||x-y|| = d(x,y)$ and $||\cdot||$ is an ultra-norm whenever $d$ is an ultra-metric.
 It is well known that the map $x \mapsto x-e$ defines an isometric embedding of a metric space $(X,d)$ into the classical Arens-Eells space. See, for example, \cite[Section 2.2]{Wea}. One may show that the same rule defines an isometric embedding of a pointed ultra-metric space $(X,d,e)$ into $(L^{0}_\mathbb F(X), ||\cdot||)$.
 For every  pointed Lipschitz function $f: X \to \mathbb F$ we have a canonically defined continuous functional
 $L^{0}_\mathbb F(X) \to \mathbb F.$
 Moreover, for a nontrivially valued NA field $\mathbb F,$  the dual NA Banach space of $L^{0}_\mathbb F(X)$ can be identified with the NA Banach space ${\rm Lip}_0$ of all  pointed Lipschitz functions  $f: X \to \mathbb F$. We omit the verification which essentially
 is very similar to the arguments of \cite[Theorem 2.2.2]{Wea}.  Note that the nontriviality of the valuation is important in order to ensure that every continuous functional $L^{0}_\mathbb F(X) \to \mathbb F$ is a Lipschitz function.
 See \cite[Prop. 3.1]{Schn}.

\section{Appendix}
\label{s:ap}

Let $(X,d)$ be a pseudometric space and $\Bbb C$ be the field of complex numbers.
As in the case of reals (Equation (\ref{secform})) define the Kantorovich seminorm
on $L^0_\Bbb C(X)$ as follows. For every $v \in L^0_\Bbb C(X) $  \begin{equation} \label{form}
	||v||=\inf\bigg\{\sum\limits_{i=1}^{l}|\rho_i|d(a_i,b_i):v=\sum\limits_{i=1}^{l}\rho_i(a_i-b_i), \ \rho_i\in \Bbb C, a_i,b_i\in X\bigg\}.
	\end{equation}
	
The following was mentioned in \cite{Flood, Wea, GaoPest}.

\begin{thm} \label{main}
	Support elements do not determine the Kantorovich norm for the field $\mathbb F:=\Bbb C$ of complex numbers.
\end{thm}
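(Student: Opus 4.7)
The plan is to construct an explicit four-point metric space $X$ and a vector $v \in L^0_{\Bbb C}(X)$ for which the infimum in (\ref{form}), restricted to decompositions with $a_i, b_i \in \operatorname{supp}(v)$, strictly exceeds the unrestricted Kantorovich value $||v||$. The strategy is to use the cube roots of unity as coefficients on an equilateral triangle, so that every support-only decomposition is forced into a Fermat--Torricelli problem, while a well-placed fourth point outside the support is uniformly closer to the three vertices than that Fermat sum would allow.

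Specifically, I would take $X = \{x_1, x_2, x_3, y\}$ with $d(x_i,x_j) = 2$ for $i \neq j$ and $d(y, x_i) = 1$ (a valid pseudometric, with equality in the triangle inequalities through $y$), let $\omega = e^{2\pi i / 3}$, and set $v := x_1 + \omega x_2 + \omega^2 x_3 \in L^0_{\Bbb C}(X)$, whose support is $\{x_1, x_2, x_3\}$. The decomposition
\[
v = (x_1 - y) + \omega(x_2 - y) + \omega^2(x_3 - y),
\]
valid because $1 + \omega + \omega^2 = 0$, yields the upper bound $||v|| \leq 3$. For the support-only infimum, I would observe that combining parallel and anti-parallel edges never increases the cost (by the triangle inequality for $|\cdot|$), so every support-only decomposition reduces to $v = c_{12}(x_1-x_2) + c_{13}(x_1-x_3) + c_{23}(x_2-x_3)$. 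The three coefficient equations (one per $x_i$) determine $c_{13}$ and $c_{23}$ in terms of $z := c_{12}$, so the cost becomes $2\bigl(|z| + |z-1| + |z - (-\omega)|\bigr)$.

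The three points $\{0,\,1,\,-\omega\}$ satisfy $|1-(-\omega)| = |1+\omega| = |-\omega^2| = 1$, so they form an equilateral triangle of side $1$. The bracketed quantity is therefore a classical Fermat--Torricelli sum, whose minimum on an equilateral triangle of side $1$ is $\sqrt{3}$, attained at the centroid. Hence the support-only infimum equals $2\sqrt{3} > 3 \geq ||v||$, which proves the theorem. The main obstacle is the rigorous identification of the Fermat--Torricelli minimum: coercivity of $z \mapsto |z| + |z-1| + |z+\omega|$ gives existence of a minimizer, strict convexity away from the vertices gives uniqueness, and the first-order optimality condition at the centroid reduces, after normalizing the three unit vectors from the centroid to the vertices, to the identity $1 + \omega + \omega^2 = 0$, confirming the centroid as the unique global minimizer with value $\sqrt{3}$.
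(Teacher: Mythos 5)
Your proposal is correct and is essentially the paper's own argument (Example \ref{e:outside}(a)): the same equilateral configuration with cube-roots-of-unity coefficients and a central auxiliary point, merely rescaled by a factor of $2$, with the same reduction of support-only decompositions to three edge coefficients and the same Fermat--Torricelli evaluation giving $\sqrt{3}\cdot(\text{side}) > (\text{upper bound via the center})$. The paper additionally verifies that the unrestricted infimum equals the center-point value exactly and is attained there, but that refinement is not needed for the theorem as stated.
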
	
The following example (which appears in \cite[p. 90]{Flood}, \cite[Ex. 1.5.7, p. 18]{Wea} without details) implies Theorem \ref{main}. That is, in general, the infimum  in (\ref{form}) cannot be achieved or even approximated by support elements. This was mentioned also in \cite{GaoPest}.
As we show below, one may say even more: in this example the infimum is attained outside the support.

\begin{example} \label{e:outside}
	Let $X=\{e,p,q,r\}$ and $d$ be a metric on $X$ defined as follows:
	$d(p,q)=d(p,r)=d(q,r)=1$ and 	$d(e,p)=d(e,q)=d(e,r)=\frac{1}{2}.$ Let $\lambda=1\cdot p+\mu q+\nu r\in L^0_\Bbb C(X),$ where $S=\{1,\mu,\nu\}$ denotes the set of the three complex
	cube roots of unity. We  show that the infimum  in the definition of $||\lambda||$ cannot be achieved or even approximated by support elements. We also show that the infimum is attained outside the support and that
	$||\lambda||=\frac{3}{2}.$
	
	\vskip 0.3cm \begin{enumerate}[(a)]
	
	\item	Since $1+\mu+\nu=0$ we have $\lambda= (p-e)+\mu(q-e)+\nu(r-e).$ It follows that
	$||\lambda||\leq d(p,e)+|\mu|d(q,e)+|\nu|d(r,e)=\frac{3}{2}.$ We will show that the minimal sum-cost which comes from presentations of $\lambda$  that include only support elements, is strictly larger than $\frac{3}{2}.$
	When dealing with support elements it suffices to consider presentations of $\lambda$ of the form $\lambda=c_{pq}(p-q)+ c_{pr}(p-r)+ c_{qr}(q-r)$ where
	$c_{pq},c_{pr},c_{qr}\in \Bbb C.$  Indeed, this follows from the reduction rules:
	\begin{enumerate} [(1)]\item Replace $m(x-y)$ with $-m(y-x).$
	\item Replace the terms $m(x-y), \ n(x-y)$ with $(m+n)(x-y).$ \end{enumerate}
	
	If $\lambda=c_{pq}(p-q)+ c_{pr}(p-r)+ c_{qr}(q-r)$ then $c_{pq}+c_{pr}=1,$  $-c_{pq}+c_{qr}=\mu,$ $   -c_{pr}-c_{qr}=\nu.$ So, the infimum is  $$\inf\{|c_{pq}|d(p,q)+|c_{pr}|d(p,r)+|c_{qr}|d(p,r):c_{pq}+c_{pr}=1,  -c_{pq}+c_{qr}=\mu, -c_{pr}-c_{qr}=\nu  \}.$$  Taking into account that 	$d(p,q)=d(p,r)=d(q,r)=1,$ we solve the system of linear equations and see that the latter expression is equal to  $\inf \limits_{t\in \Bbb C}(|\mu-t|+|0-t|+|-\nu-t|).$ Finding this infimum is a simple geometrical problem since $0,\mu,-\nu$ are three vertices of an equilateral triangle in the complex plane.  It follows that the infimum is equal to $\sqrt{3}.$ Clearly $\sqrt{3}>\frac{3}{2}$ as needed.

	\vskip 0.5cm \item We will show that the infimum is attained outside the support and that $||\lambda||=\frac{3}{2}.$ We already know that there exists a presentation of $\lambda$ for which the value of the sum-cost is $\frac{3}{2}.$ So $||\lambda||\leq\frac{3}{2}$  and it suffices to show that 	$||\lambda||\geq \frac{3}{2}.$
	This is done by showing that for every presentation of $\lambda$ of the form $\lambda=c_{ep}(e-p)+ c_{eq}(e-q)+ c_{er}(e-r)+c_{pq}(p-q)+ c_{pr}(p-r)+ c_{qr}(q-r),$ where 	$c_{ep},c_{eq},c_{er},c_{pq},c_{pr},c_{qr}\in \Bbb C,$ we have
	$$|c_{ep}|d(e,p)+	|c_{eq}|d(e,q)+|c_{er}|d(e,r)	+|c_{pq}|d(p,q)+|c_{pr}|d(p,r)+|c_{qr}|d(q,r)=$$$$=\frac{1}{2}(|c_{ep}|+|c_{eq}|+|c_{er}|)+|c_{pq}|+|c_{pr}|+|c_{qr}|\geq \frac{3}{2}.$$ We compare  the coefficients of $e,p,q,r$ in the normal presentation of $\lambda$ and in the "new" presentation and obtain   \begin{enumerate} [(1)]\item $c_{ep}+c_{eq}+c_{er}=0,$\item $-c_{ep}+c_{pq}+c_{pr}=1,$
	\item $-c_{eq}-c_{pq}+c_{qr}=\mu,$ \item  $-c_{er}-c_{pr}-c_{qr}=\nu.$ \end{enumerate}
	
	Now, using the triangle inequality and properties $(2)-(4),$ we obtain  $$\frac{1}{2}(|c_{ep}|+|c_{eq}|+|c_{er}|)+|c_{pq}|+|c_{pr}|+|c_{qr}|=\frac{1}{2}(|-c_{ep}|+|c_{pq}|+|c_{pr}|)+\frac{1}{2}(|-c_{eq}|+|-c_{pq}|+|c_{qr}|)+$$$$+\frac{1}{2}(|-c_{er}|+|-c_{pr}|+|-c_{qr}|)\geq \frac{1}{2}(|-c_{ep}+c_{pq}+c_{pr}|+|-c_{eq}-c_{pq}+c_{qr}|+|-c_{er}-c_{pr}-c_{qr}|)=$$$$=\frac{1}{2}(|1|+|\mu|+|\nu|)=\frac{3}{2}.$$ \end{enumerate}
\end{example}
We showed  that in the archimedean case the infimum can be attained outside the support.
In fact, as the following example shows, sometimes the infimum is not attained at all.
\begin{example}
	Let $X=\{p,q,r\}$ and $d$ be a metric on $X$ such that
	$d(p,q)=d(p,r)=d(q,r)=1.$ Let $\mathbb F=\Bbb Q(i)$ be the subfield of $\Bbb C,$ where   $\Bbb Q(i):=\{a+bi:\ a,b\in\Bbb Q\}.$ We will show that the infimum in the definition of $||u||$ is not attained in $\mathbb F$ for $u=(1-i)p+iq-r.$  It suffices to show that the infimum $$\inf\{|c_{pq}|d(p,q)+|c_{pr}|d(p,r)+|c_{qr}|d(q,r): c_{pq}+c_{pr}=1-i, \ -c_{pq}+c_{qr}=i,  \  -c_{pr}-c_{qr}=-1  \}= $$$$= \inf \limits_{t\in \mathbb F}(|t|+|t-i|+|t-1|)$$  is not attained. Since $\mathbb F=\Bbb Q(i)$ is a dense subfield of $\Bbb C$ it follows that the latter expression is equal to $\inf \limits_{t\in \Bbb C}(|t-i|+|1-t|+|t|).$ This infimum is attained at a unique point $p\in \Bbb C$ that is the Fermat-Torricelli point of the triangle in the complex plane with vertices
	$0,1,i.$ One can show that $p\notin \Bbb Q(i).$ By the uniqueness of $p$ it follows that the infimum in the definition of $||u||$ is not attained in $\mathbb F.$
	\end{example}
	 \section{Some possible developments and problems}

	 \begin{enumerate}
	 \item
	 One of the most attractive directions is the study of concrete applications of NATP
	 (non-archimedean transportation problem).
	 \item A natural perspective is to extend the discrete version of
	 NATP to a \emph{continuous} one (which in the classical case is based on measures).
	 \item It would be interesting to look for additional properties of
	 the free NA locally convex $\mathbb F$-space.
	 \end{enumerate}
	
	 \vskip 0.3cm
	 \noindent {\bf Acknowledgments.} We thank
	 R. Ben-Ari, A.M.  Brodsky, G. Luk\'{a}cs,  V. Pestov, L. Polev and
	 S.T. Rachev for valuable suggestions.
\bibliographystyle{amsalpha}

\end{document}